\renewcommand\eqref[1]{(\ref{#1})} 
\allowdisplaybreaks \numberwithin{equation}{section}
\theoremstyle{plain}
\newtheorem{theorem}{Theorem}[section]
\newtheorem{prop}[theorem]{Proposition}
\newtheorem{lemma}[theorem]{Lemma}
\theoremstyle{definition}
\newtheorem{defn}[theorem]{Definition}
\newtheorem{rem}[theorem]{Remark}
\begin{document}

\title[Pseudo-parabolic equations associated with the Jacobi operator]{Inverse source problem for the pseudo-parabolic equation associated with the Jacobi operator}

\author[B. Bekbolat ]{Bayan Bekbolat}
\address{
  Bayan Bekbolat :
  \endgraf
  Al-Farabi Kazakh National University
  \endgraf
  Almaty, Kazakhstan
  \endgraf
  and
  \endgraf 
  Department of Mathematics: Analysis, Logic and Discrete Mathematics
  \endgraf
  Ghent University, Belgium
  \endgraf
  and
  \endgraf   
  Institute of Mathematics and Mathematical Modeling
  \endgraf
  Almaty, Kazakhstan
  \endgraf  
  and
  \endgraf 
  Suleyman Demirel University
  \endgraf
  Kaskelen, Kazakhstan
  \endgraf
  {\it E-mail address:} {\rm bekbolat@math.kz}
  }

\author[N. Tokmagambetov ]{Niyaz Tokmagambetov }
\address{
Niyaz Tokmagambetov:
 \endgraf 
  Centre de Recerca Matem\'atica
  \endgraf
  Campus de Bellaterra, Edifici C, 08193 Barcelona, Spain
  \endgraf
  and
  \endgraf 
   Institute of Mathematics and Mathematical Modeling
  \endgraf
  Almaty, Kazakhstan
  \endgraf
  {\it E-mail address} {\rm tokmagambetov@crm.cat; tokmagambetov@math.kz}
  }

\date{\today}

\thanks{This research was funded by the Science Committee of the Ministry of Science and Higher Education of the 
Republic of Kazakhstan (Grant No. AP14972634).}

\keywords{Jacobi operator, Jacobi transform,  time-fractional
pseudo-parabolic equation, inverse source problem.}
\subjclass[2020]{Primary 35R30; Secondary 35R11, 35C15.}

\maketitle

\begin{abstract}
In this paper we investigate direct and inverse problems for time-fractional pseudo-parabolic equations associated with the Jacobi operator. The existence and uniqueness of the solutions are proved. Also, the stability result of the inverse source problem (ISP) is established. 
\end{abstract}


\section{Introduction}

The main object of this paper is the following non-homogeneous time-fractional pseudo-parabolic equation on the domain $D=\{(t,x) : 0<t<T<\infty, \, x\in\mathbb{R}^+=(0,\infty)\}$ 
\begin{equation*} 
\mathbb{D}_{0^+,t}^\gamma\left( u(t,x)-a\Delta_{\alpha,\beta}u(t,x)\right)-\Delta_{\alpha,\beta}u(t,x)+mu(t,x)=f(x),
\end{equation*}
where $0<\gamma\leq1$, with non--negative constants $m$ and $a$, and with the initial condition
\begin{equation*}
u(0,x)=\phi(x),\quad x\in\mathbb{R}^+,
\end{equation*}
where $\mathbb{D}_{0^+,t}^\gamma$ is given by
\begin{equation*}
\mathbb{D}_{0^+,t}^\gamma =
\begin{cases}
     \mathcal{D}_{0^+,t}^\gamma, \quad 0<\gamma<1,\\
     \frac{d}{dt}, \quad \gamma=1,
\end{cases}
\end{equation*}
$\mathcal{D}_{0^+,t}^\gamma$ is the left-sided Caputo fractional derivative and $\Delta_{\alpha,\beta}$ is the Jacobi operator given by the expression 
\begin{equation}\label{EQ: 1.1}
\Delta_{\alpha,\beta}=A_{\alpha,\beta}^{-1}(x)\frac{d}{dx}\Bigl(A_{\alpha,\beta}(x)\frac{d}{dx}\Bigr),\quad x\in(0,\infty).
\end{equation}\\
Here, we denote by $A_{\alpha,\beta}(x)=2^{2\rho}(\sinh(x))^{2\alpha+1}(\cosh(x))^{2\beta+1}, \rho=\alpha+\beta+1$, with $\alpha\geq\beta\geq-\frac{1}{2}$.

In our studies we would be questioned about the well--posedness of the direct problem and the stability of the inverse source problem with the additional information --  over-determination condition
\begin{equation*}
u(T,x)=\psi(x),\quad x\in\mathbb{R}^+.
\end{equation*}
For the ISP we will restore the pair $(u, f)$ under some conditions on the function $\psi$.

One of the first mathematicians who studied the ISP was Rundell \cite{Run80} in 1980s. He considered the  evolution type equation
\begin{equation} \label{DiffEqu}
    \frac{du}{dt}+Au=f
\end{equation}
in a Banach space $X$, where $A$ is linear operator in $X$ and $f$ is a constant vector in $X$, with conditions
\begin{equation*}
    u(0)=u_0, \quad \text{and} \quad u(T)=u_1.
\end{equation*}
Using semigroups of operators Rundell proved a general theorem about the existence of a unique solution pair $(u(t),f)$ of the problem, which then was applied to equations of parabolic and pseudo-parabolic types. When the non-homogeneous term is represented in the form $f(t)=\Phi(t)f$, where $\Phi(t)$ is known operator and the element $f$ is unknown, and $A$ is a closed linear operator from $L_p(\Omega)$ into $L_p(\Omega)$, several ISPs for the equation \eqref{DiffEqu} were studied by A.I. Prilepko and I.V. Tikhonov \cite{PT92} in 1992. They applied obtained results to the transport equation. In the general case, where the unknown source depends on time, under a sufficient condition, ISPs for the equation \eqref{DiffEqu} with the linear elliptic partial differential operator $A$ of order $2m$ with the bounded measurable coefficients such that
\begin{equation*}
    (A\varphi,\varphi)\geq \|\varphi\|^2 
\end{equation*}
for all $\varphi\in H^{2m}(\Omega)\cap H_0^{m}(\Omega)$, $\mu=constant>0$ was investigated by I. Bushuyev \cite{B95} in 1995. 

Nonetheless, there is no general closed theory for abstract case of $F(x,t)$. Known results deal with separated source terms. In 2002 I.V. Tikhonov and Yu.S. Eidelman \cite{TE02} considered ISPs for the generalization of the equation \eqref{DiffEqu} of the form
\begin{equation*}
    \frac{d^Nu(t)}{dt^N} = Au(t) + p, \quad 0<t<T,
\end{equation*}
for some positive integer $N\geq1$ and some real number $T>0$ with an unknown parameter $p$ and a closed linear operator $A$ in the Banach space under the Cauchy conditions and "over-determination condition" $u(T)=u_N$ (also in the Banach space). 

For the Laplace operator $(-\Delta)$ which is one of the most interesting examples in Physics, M. Choulli and M. Yamamoto in \cite{CY04} established the uniqueness and conditional stability in determining a heat source term from boundary measurements with $f=\sigma(t)\varphi(x)$, where $\sigma(t)$ is known.

Asymptotic behaviour of the solution of the inverse source problem for the pseudo-parabolic equation
\begin{equation*}
    (u(x,t) - \Delta u(x,t))_t - \Delta u(x,t) + \alpha u(x,t) = f(t)g(x,t), \quad Q_\infty = \Omega\times(0,\infty)
\end{equation*}
with a integral over-determination condition was studied by M. Yaman and \"{O}. F. G\"{o}z\"{u}kızıl in \cite{YG03} in 2004. 

Fractional derivatives and fractional partial differential equations have received great attention both in analysis and application, which are used in modeling several phenomena in different areas of science such as biology, physics, and chemistry, so the fractional computation is increasingly attracted to mathematicians in the last several decades. ISP for the time fractional parabolic equation
\begin{equation*}
    {}^c D_t^\alpha u(x,t) = r^\alpha (Lu)(x,t) + f(x)h(x,t), \quad x\in\Omega,\,\ t\in(0,T),\,\ 0<\alpha<1,
\end{equation*}
where ${}^c D_t^\alpha$ is the Caputo derivative defined by
\begin{equation*}
    {}^c D_t^\alpha g(t) = \frac{1}{\Gamma(1-\alpha)} \int_0^t (t-\tau)^{-\alpha} \frac{d}{d\tau}g(\tau)d\tau
\end{equation*}
and $L$ is a symmetric uniformly elliptic operator was considered by K. Sakamoto and M. Yamamoto in \cite{SY11} in 2011. The authors proved that the inverse problem is
well-posed in the Hadamard sense except for a discrete set of values of diffusion constants using final overdetermining data. Blow-up solution and stability to ISP for the pseudo-parabolic equation
\begin{equation*}
    u_t - a\Delta u_t - \Delta u + \sum_{i=1}^n b_i u_{x_i} - |u|^p u = f(t)g(t), \quad x\in\Omega, t>0
\end{equation*}
with the integral overdetermination condition was studied by Metin Yaman in \cite{Yam12} in 2012. ISP for the equation \eqref{DiffEqu} considered by M.M. Slodi\u{c}ka in \cite{S13} in 2013, when $A$ is a linear differential operator of second-order, strongly elliptic, and the right-hand side $f$ is assumed to be separable in both variables $x$ and $t$, i.e. $f(x,t)=g(x)h(t)$ (in this case $h(t)$ is unknown). ISP for a semilinear time-fractional diffusion equation of second order in a bounded domain in $\mathbb{R}^d$
\begin{equation*}
    (g_{1-\beta}*\partial_t u(x))(t) + L(x,t)u(x,t) = h(t)f(x) + \int_0^t F(x,s,u(x,s))ds
\end{equation*}
with a linear second order differential operator $L(x,t)$ in the divergence form with space and time dependent coefficients was studied by M. Slodi\u{c}ka and M. \u{S}i\u{s}kova in \cite{SS16} in 2016. Authors showed the existence, uniqueness and
regularity of a weak solution $(u,h)$ (\cite[Theorem 2.1, p. 1658]{SS16}). One of the recent papers for inverse source problems for pseudo-parabolic equations with fractional derivatives is \cite{RSTT21} (in 2021). In \cite{RSTT21}, authors have considered solvability of an inverse source problem for the pseudo-parabolic equation with the Caputo fractional derivative $\mathcal{D}_t^\alpha$ of order $0<\alpha\leq1$
\begin{equation*}
    \mathcal{D}_t^\alpha(u(t)+\mathcal{L}u(t))+\mathcal{M}u(t)=f(t) \quad \text{in} \quad \mathcal{H},
\end{equation*}
\begin{equation*}
    u(0)=\phi\in\mathcal{H}, \quad u(T)=\psi\in\mathcal{H},
\end{equation*}
where $\mathcal{H}$ be a separable Hilbert space and $\mathcal{L}$, $\mathcal{M}$ be operators with the corresponding discrete spectra on $\mathcal{H}$. The authors obtained well-posedness results.

A number of articles address the solvability of the inverse problems for the diffusion and sub-diffusion equations (\cite{CNYY09, JR15, KS10, KST17, OS12a, OS12b,  RTT19}) and fractional diffusion equations (\cite{SSB19, TT17, WYH13}).

The semigroups $(H_t^{(\alpha,\beta)})_{t\geq0}$ (the solution of the heat equation associated with the Jacobi-Dunkl operator $\Lambda_{\alpha,\beta}^2$ ) generate a new family of Markov processes on the real line. On some Riemannian symmetric spaces this process is the radial part of the Brownian motion for particular values of $(\alpha,\beta)$ \cite{CGM06}. 

However, the ISP for the pseudo-parabolic equations generated by the Jacobi operator $\Delta_{\alpha,\beta}$ \eqref{EQ: 1.1} have not been still considered. So, our goal is to consider the ISP for the pseudo-parabolic equation with this special operator. Harmonic analysis associated with the operator $\Delta_{\alpha,\beta}$ has been studied by M. Flensted-Jensen and T. H. Koornwinder \cite{FJ72, FJK73, FJK79, Koo75}. The spectral decomposition of the Jacobi operator was considered by M. Flensted-Jensen in 1972 \cite{FJ72}. There were obtained a generalization of the classical Paley-Wiener Theorem and a generalized Fourier transform $\mathcal{F}_{\alpha,\beta}$, is called Jacobi-Fourier transform. Eigenfunctions $\varphi_\lambda ^{\alpha, \beta}(x)$ of the operator Jacobi is called the Jacobi function, which is hypergeometric function. The pseudo-differential operators (see \cite{SD98}) and Sobolev type spaces $G_{\alpha,\beta}^{s,p}$ (see \cite{SD00}) associated with the Jacobi operator was studied by N. Ben Salem and A. Dachraoui. In \cite{SD98}, authors proved that a pseudo-differential operator associated with a symbol in $S_0^m$ is a continuous linear mapping from some subspace of the Schwartz space into itself.

Our main result reads as follows.

\begin{theorem} Let $0<\gamma\leq1$. Assume that $\psi,\phi\in\mathcal{H}$. Then the pair $(u,f)$ is a unique solution of the ISP, which are functions $u\in C^\gamma([0,T],L^{2}(\mu))\cap C([0,T],\mathcal{H}), f\in L^{2}(\mu)$ can be represented by the formulas
\begin{multline*}
    u(t,x)=\int_0^\infty\int_0^\infty\frac{1-\mathbb{E}_{\gamma,1}\left(-\frac{\lambda^{2}+\rho^2+m}{1+a(\lambda^{2}+\rho^2)}t^\gamma\right)}{1-\mathbb{E}_{\gamma,1}\left(-\frac{\lambda^{2}+\rho^2+m}{1+a(\lambda^{2}+\rho^2)}T^\gamma\right)}\psi(y)\varphi_{\lambda}^{\alpha,\beta}(y)\varphi_{\lambda}^{\alpha,\beta}(x)d\mu_{\alpha,\beta}(y)d\nu_{\alpha,\beta}(\lambda)\\
    -\int_0^\infty\int_0^\infty\frac{\mathbb{E}_{\gamma,1}\left(-\frac{\lambda^{2}+\rho^2+m}{1+a(\lambda^{2}+\rho^2)}T^\gamma\right)-\mathbb{E}_{\gamma,1}\left(-\frac{\lambda^{2}+\rho^2+m}{1+a(\lambda^{2}+\rho^2)}t^\gamma\right)}{1-\mathbb{E}_{\gamma,1}\left(-\frac{\lambda^{2}+\rho^2+m}{1+a(\lambda^{2}+\rho^2)}T^\gamma\right)}\\
    \times\phi(y)\varphi_{\lambda}^{\alpha,\beta}(y)\varphi_{\lambda}^{\alpha,\beta}(x)d\mu_{\alpha,\beta}(y)d\nu_{\alpha,\beta}(\lambda)
\end{multline*}
and
\begin{multline*}
    f(x)=\int_0^\infty\int_0^\infty(\lambda^{2}+\rho^2+m)\frac{\psi(y)-\phi(y)\mathbb{E}_{\gamma,1}\left(-\frac{\lambda^{2}+\rho^2+m}{1+a(\lambda^{2}+\rho^2)}T^\gamma\right)}{1-\mathbb{E}_{\gamma,1}\left(-\frac{\lambda^{2}+\rho^2+m}{1+a(\lambda^{2}+\rho^2)}T^\gamma\right)}\\
    \times\varphi_{\lambda}^{\alpha,\beta}(y)\varphi_{\lambda}^{\alpha,\beta}(x)d\mu_{\alpha,\beta}(y)d\nu_{\alpha,\beta}(\lambda).
\end{multline*}
\end{theorem}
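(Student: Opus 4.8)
The plan is to use the Jacobi–Fourier transform $\mathcal{F}_{\alpha,\beta}$ to diagonalize $\Delta_{\alpha,\beta}$ and reduce the pseudo-parabolic PDE to a decoupled family of fractional ODEs indexed by the spectral parameter $\lambda$. First I would apply $\mathcal{F}_{\alpha,\beta}$ in the $x$ variable, writing $\widehat{u}(t,\lambda) = \langle u(t,\cdot), \varphi_\lambda^{\alpha,\beta}\rangle_{L^2(\mu)}$ and using that $\Delta_{\alpha,\beta}\varphi_\lambda^{\alpha,\beta} = -(\lambda^2+\rho^2)\varphi_\lambda^{\alpha,\beta}$. The equation becomes
\begin{equation*}
\mathbb{D}_{0^+,t}^\gamma\bigl((1+a(\lambda^2+\rho^2))\widehat{u}(t,\lambda)\bigr) + (\lambda^2+\rho^2+m)\widehat{u}(t,\lambda) = \widehat{f}(\lambda),
\end{equation*}
i.e. $\mathbb{D}_{0^+,t}^\gamma \widehat{u}(t,\lambda) + \kappa_\lambda \widehat{u}(t,\lambda) = \eta_\lambda \widehat{f}(\lambda)$ with $\kappa_\lambda = \frac{\lambda^2+\rho^2+m}{1+a(\lambda^2+\rho^2)}$ and $\eta_\lambda = \frac{1}{1+a(\lambda^2+\rho^2)}$, subject to $\widehat{u}(0,\lambda)=\widehat{\phi}(\lambda)$. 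For fixed $\lambda$, both the case $0<\gamma<1$ (Caputo) and $\gamma=1$ are solved by the Mittag-Leffler function: the solution is
\begin{equation*}
\widehat{u}(t,\lambda) = \mathbb{E}_{\gamma,1}(-\kappa_\lambda t^\gamma)\,\widehat{\phi}(\lambda) + \eta_\lambda \widehat{f}(\lambda)\!\int_0^t (t-s)^{\gamma-1}\mathbb{E}_{\gamma,\gamma}(-\kappa_\lambda (t-s)^\gamma)\,ds,
\end{equation*}
and the integral evaluates via the identity $\kappa_\lambda\eta_\lambda^{-1}=\lambda^2+\rho^2+m$ together with $\int_0^t s^{\gamma-1}\mathbb{E}_{\gamma,\gamma}(-\kappa s^\gamma)ds = \frac{1}{\kappa}(1-\mathbb{E}_{\gamma,1}(-\kappa t^\gamma))$ to give $\widehat{u}(t,\lambda) = \mathbb{E}_{\gamma,1}(-\kappa_\lambda t^\gamma)\widehat{\phi}(\lambda) + \frac{1-\mathbb{E}_{\gamma,1}(-\kappa_\lambda t^\gamma)}{\lambda^2+\rho^2+m}\,\widehat{f}(\lambda)$.

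Next I would impose the over-determination condition $\widehat{u}(T,\lambda) = \widehat{\psi}(\lambda)$. This is a single linear equation for $\widehat{f}(\lambda)$: solving it yields
\begin{equation*}
\widehat{f}(\lambda) = (\lambda^2+\rho^2+m)\,\frac{\widehat{\psi}(\lambda) - \mathbb{E}_{\gamma,1}(-\kappa_\lambda T^\gamma)\,\widehat{\phi}(\lambda)}{1-\mathbb{E}_{\gamma,1}(-\kappa_\lambda T^\gamma)},
\end{equation*}
and substituting back into the formula for $\widehat{u}(t,\lambda)$ and collecting the coefficients of $\widehat{\psi}(\lambda)$ and $\widehat{\phi}(\lambda)$ produces exactly the two kernels in the statement. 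Applying the inverse Jacobi transform (Plancherel) gives the claimed double-integral representations of $u$ and $f$.

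It then remains to verify (a) that the denominator $1-\mathbb{E}_{\gamma,1}(-\kappa_\lambda T^\gamma)$ is bounded away from $0$ uniformly in $\lambda\ge 0$, and (b) the regularity claims. For (a), since $0<\gamma\le 1$, $\kappa_\lambda\ge 0$, and $\kappa_\lambda$ is bounded below by $\min\{m,\,1/a\}$-type quantity (and in any case $\kappa_\lambda>0$ for $\lambda>0$), the monotonicity and bounds $0<\mathbb{E}_{\gamma,1}(-x)\le 1$ for $x\ge 0$, together with $\mathbb{E}_{\gamma,1}(-x)\to$ a value $<1$, give a uniform lower bound $1-\mathbb{E}_{\gamma,1}(-\kappa_\lambda T^\gamma)\ge c_0>0$; one must treat the edge case $\lambda=0$, $m=0$ (where $\kappa_0=0$ and the denominator vanishes) — presumably the hypotheses or the measure $\nu_{\alpha,\beta}$ exclude this, or $\rho>0$ since $\rho=\alpha+\beta+1\ge 1/2>0$ guarantees $\kappa_\lambda\ge \rho^2/(1+a(\lambda^2+\rho^2))$, which however is not bounded below as $\lambda\to\infty$ unless $a=0$; so the genuinely delicate point is the large-$\lambda$ behaviour when $a>0$, where $\kappa_\lambda\to 1/a$, keeping the denominator bounded below. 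For (b), using the asymptotics $\mathbb{E}_{\gamma,1}(-x)\sim \frac{1}{\Gamma(1-\gamma)x}$ as $x\to\infty$, the multiplier $(\lambda^2+\rho^2+m)\cdot\frac{1}{1-\mathbb{E}_{\gamma,1}(-\kappa_\lambda T^\gamma)}$ acting on $\widehat{\psi},\widehat{\phi}\in L^2(\nu_{\alpha,\beta})$ stays in $L^2$ — this is where the assumption $\psi,\phi\in\mathcal{H}$ (with $\mathcal{H}$ presumably a Sobolev-type space adapted to $\Delta_{\alpha,\beta}$ controlling $(\lambda^2+\rho^2)\widehat{\cdot}$) is used — giving $f\in L^2(\mu)$; and the $C^\gamma$-in-time and $C$-in-$\mathcal{H}$ regularity of $u$ follows from continuity/Hölder estimates for $t\mapsto \mathbb{E}_{\gamma,1}(-\kappa_\lambda t^\gamma)$ uniform in $\lambda$, dominated convergence, and the Plancherel identity. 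Finally, uniqueness follows because the reduction is reversible: any solution pair must have Jacobi coefficients satisfying the same scalar ODE and over-determination relation, which has a unique solution given the nonvanishing denominator. The main obstacle I anticipate is item (a) — establishing the uniform lower bound on the denominator across all spectral values, especially reconciling the $\lambda\to\infty$ regime with the constants $a$ and $m$ — since everything else is a fairly mechanical transform-and-solve argument.
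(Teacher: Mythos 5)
Your proposal follows essentially the same route as the paper: apply the Jacobi--Fourier transform to reduce to a scalar fractional ODE in $t$ for each $\lambda$, solve via Mittag--Leffler functions, use the conditions at $t=0$ and $t=T$ to determine $\widehat{f}(\lambda)$ and the remaining constant, invert the transform, and estimate the resulting multipliers using $0<\mathbb{E}_{\gamma,1}(-s)<1$ (the paper's Lemma \ref{lemma}) together with Plancherel. The delicate point you single out --- a uniform lower bound on $1-\mathbb{E}_{\gamma,1}\left(-\frac{\lambda^2+\rho^2+m}{1+a(\lambda^2+\rho^2)}T^\gamma\right)$, which holds since $\frac{\lambda^2+\rho^2+m}{1+a(\lambda^2+\rho^2)}$ is monotone in $\lambda^2$ with positive infimum whenever $\rho^2+m>0$ --- is exactly the step the paper passes over with an unexplained $\lesssim$ in the estimate of $\|f\|_{2,\mu}$, so your flagging of it (including the degenerate case $\rho=m=0$) is a fair criticism of the paper rather than a gap in your argument.
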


The contents of this paper as follows. In Section 2, we collect some results about harmonic analysis associated with the Jacobi operator on $\mathbb{R}^+$ and here we introduce the Sobolev type space $\mathcal{H}$, also given some necessary information about fractional derivative. In Section 3, we prove Theorem \ref{Theorem1} for the direct problem. In Section 4, we prove our main Theorem \ref{Theorem2} about solvability of the inverse source problem associated with the Jacobi operator on $\mathbb{R}^+$, also shown stability analysis and example for the inverse source problem.

\section{Preliminaries}

\subsection{Jacobi analysis}

The singular second order differential equation (\cite{FJ72})
\begin{equation*}
    \Delta_{\alpha, \beta}\varphi_\lambda ^{\alpha, \beta}(x)+(\lambda ^{2}+\rho ^{2})\varphi_\lambda ^{\alpha, \beta}(x)=0 \quad \text{on} \quad (0,\infty)
\end{equation*}
with initial conditions
\begin{equation*}
    \varphi_\lambda ^{\alpha, \beta}(0)=1, \quad \frac{d}{dt}\varphi_\lambda ^{\alpha, \beta}(0)=0
\end{equation*}
has a unique solution, given by the expression
\begin{equation}
\label{Jacobi Function}
    \varphi_\lambda ^{\alpha, \beta}(x) = {}_2F_1\Bigl( \frac{1}{2}(\rho +i\lambda), \frac{1}{2}(\rho -i\lambda);\alpha +1;-\sinh^{2}x \Bigr),
\end{equation}
where ${}_2F_1$ is the Gauss hypergeometric function. The function $\varphi_\lambda ^{\alpha, \beta}$ \eqref{Jacobi Function} is called the Jacobi function and analytic for $x\in[0,\infty)$ and
\begin{equation*}
    \varphi_\lambda ^{\alpha, \beta}(x)=\varphi_{-\lambda} ^{\alpha, \beta}(x) \quad \text{and} \quad \overline{\varphi_{\lambda}^{\alpha, \beta}(x)}=\varphi_{\overline{\lambda}}^{\alpha, \beta}(x).
\end{equation*}
In particularly, we have
\begin{equation*}
    \varphi_\lambda ^{-\frac{1}{2}, -\frac{1}{2}}(x)=\cos(\lambda x).
\end{equation*}
\begin{rem} (\cite[Proposition 1, p. 144]{FJ72})
For each fixed $x\in(0,\infty)$, as a function of $\lambda$, $\varphi_\lambda ^{\alpha, \beta}$ is an entire function.  
\end{rem}

Properties of the Jacobi function:\\
1. For all $\lambda \in \mathbb{C}$ and $x\in[0,\infty)$, we have (\cite[Lemma 11, p. 153]{FJ72})\\
i) $|\varphi_\lambda ^{\alpha, \beta}(x)|\leq \varphi_{i Im\lambda}^{\alpha, \beta}(x)$,\\
ii) If $|Im\lambda|\geq\rho$ then $|\varphi_\lambda ^{\alpha, \beta}(x)|\leq e^{(|Im\lambda|-\rho)x}$,\\
iii) If $|Im\lambda|\leq\rho$ then $|\varphi_\lambda ^{\alpha, \beta}(x)|\leq 1$.\\
2. For all $n\in\mathbb{Z}^+$ there exists $K_{n}>0$ such that (\cite[Theorem 2, p. 145]{FJ72})
\begin{equation*}
\left|\frac{d^{n}}{dx^{n}}\varphi_\lambda ^{\alpha, \beta}(x)\right|\leq K_{n}(1+x)(1+|\lambda|)^{n}e^{(|Im\lambda|-\rho)x}
\end{equation*}
and
\begin{equation*}
\left|\frac{d^{n}}{d\lambda^{n}}\varphi_\lambda ^{\alpha, \beta}(x)\right|\leq K_{n}(1+x)^{n+1}e^{(|Im\lambda|-\rho)x}
\end{equation*}
for all $\lambda\in\mathbb{C}$, $x\in[0,\infty)$.

Let us introduce the following functions spaces (\cite[p. 146-147]{FJ72}, \cite[p. 368]{SD98}).

Let $\mathcal{S}_e(\mathbb{R})$ be the space of even, rapidly decreasing, and $C^\infty$-functions on $\mathbb{R}$, equipped with usual Schwartz topology, and $\mathcal{S}_e^{r}(\mathbb{R})=\{(\cosh x)^{\frac{-2\rho}{r}}\mathcal{S}_e(\mathbb{R})\}$, $0<r\leq2$ be the space with the topology defined by the semi-norms
\begin{equation*}
    N_{n,k}(f)=\sup_{x\geq0}(\cosh x)^{\frac{2\rho}{r}}(1+x)^{n}\left|\frac{d^k}{dx^k}f(x)\right|.
\end{equation*}
Clearly $\mathcal{S}_e^{r}(\mathbb{R})$ is invariant under $\Delta_{\alpha, \beta}$ and the semi-norms defined by
\begin{equation*}
    N_{n,k}(f)=\sup_{x\geq0}(\cosh x)^{\frac{2\rho}{r}}(1+x)^{n}|\Delta_{\alpha,\beta}^k f(x)|
\end{equation*}
are continuous on $\mathcal{S}_e^{r}(\mathbb{R})$.

Let $L^{p}(\mathbb{R}^+,\mu_{\alpha,\beta}), 1\leq p<\infty$ be the space of measurable functions $f$ on $\mathbb{R}^+$ such that
\begin{equation*}
    \|f\|_{p,\mu}^p=\int_0^{\infty}|f(x)|^{p}d\mu_{\alpha,\beta}(x)<\infty,
\end{equation*}
where $d\mu_{\alpha,\beta}(x)=(2\pi)^{-\frac{1}{2}}2^{2\rho}(\sinh x)^{2\alpha+1}(\cosh x)^{2\beta+1}dx$ or $d\mu_{\alpha,\beta}(x)=(2\pi)^{-\frac{1}{2}}A_{\alpha,\beta}(x)dx$. 
\begin{rem}\cite[p. 146]{FJ72}
Notice that $\mathcal{S}_e^{r}(\mathbb{R})\subset L^{r}(\mathbb{R}^+,\mu_{\alpha,\beta})$ for all $0<r\leq2$ and if $r\leq s$ then $\mathcal{S}_e^{r}(\mathbb{R})\subseteq\mathcal{S}_e^{s}(\mathbb{R})\subset L^{2}(\mathbb{R}^+,\mu_{\alpha,\beta})$.
\end{rem}
Let
$L^{p}(\mathbb{R}^+,\nu_{\alpha,\beta}), 1\leq p<\infty$ be the space of measurable functions $g$ on $\mathbb{R}^+$ such that
\begin{equation*}
    \|f\|_{p,\nu}^p=\int_0^{\infty}|g(\lambda)|^{p}d\nu_{\alpha,\beta}(\lambda)<\infty,
\end{equation*}
where $d\nu_{\alpha,\beta}(\lambda)=(2\pi)^{-\frac{1}{2}}|c_{\alpha, \beta}(\lambda)|^{-2}d\lambda.$ Here, $c_{\alpha, \beta}(\lambda)$ is the Harish–Chandra function, given by
\begin{equation*}
    c_{\alpha, \beta}(\lambda) = \frac{2^{\rho-i\lambda}\Gamma(i\lambda)\Gamma(\alpha+1)}{\Gamma(\frac{\rho+i\lambda}{2})\Gamma(\frac{\alpha-\beta+1+i\lambda}{2})}.
\end{equation*}

For short, we use notations $L^{p}(\mu)$ and $L^{p}(\nu)$ instead $L^{p}(\mathbb{R}^+,\mu_{\alpha,\beta})$ and $L^{p}(\mathbb{R}^+,\nu_{\alpha,\beta})$, respectively.

For $f\in L^{1}(\mu)$ the Fourier-Jacobi transform $\mathcal{F}_{\alpha,\beta}$ of $f$ is defined by (\cite[Proposition 3, p. 146]{FJ72}, \cite[Definition 1.1, p. 369]{SD98})
\begin{equation}
\label{Jacobi Transform}
\widehat{f}(\lambda)=(\mathcal{F}_{\alpha,\beta}f)(\lambda)=\int_0^\infty f(x)\varphi_{\lambda}^{\alpha,\beta}(x) d\mu_{\alpha,\beta}(x)
\end{equation}
and for $g\in L^{1}(\nu)$ the inverse Fourier-Jacobi transform $\mathcal{F}_{\alpha,\beta}^{-1}$ is given by
\begin{equation}
\label{Invers Jacobi Transform}
\Bigl(\mathcal{F}_{\alpha,\beta}^{-1}g\Bigr)(x)=\int_0^\infty g(\lambda)\varphi_{\lambda}^{\alpha,\beta}(x) d\nu_{\alpha,\beta}(\lambda),
\end{equation}
where $\varphi_{\lambda}^{\alpha,\beta}$ is the Jacobi functions \eqref{Jacobi Function}.
\begin{prop} (\cite[p. 145-146]{FJ72})
The operator in $L^2(\mu)$ defined by $\Delta_{\alpha,\beta}$ with domain 
\begin{equation*}
    D_{\alpha,\beta}^0 = \{ u\in L^2(\mu) \,\, : \,\, u \quad \text{and} \quad u' \quad \text{are absolutely continuous and} \quad \Delta_{\alpha,\beta} u\in L^2(\mu) \}
\end{equation*}
can be restricted to a domain $D_{\alpha,\beta}$, such that $\Delta_{\alpha,\beta}$ becomes self-adjoint. $\Delta_{\alpha,\beta}$ contains at least functions in $D_{\alpha,\beta}^0$ which are differentiable at zero. $\Delta_{\alpha,\beta}$ has limit-point at $\infty$; and at zero there is limit-point if $2\alpha+1\geq3$, and limit-circle if $2\alpha+1<3$. In this last case $D_{\alpha,\beta}\neq D_{\alpha,\beta}^0$ and choosing $\lambda_1\in\mathbb{C}$ with $Im\lambda_1^2>0$ we can define
\begin{equation*}
    D_{\alpha,\beta} = \{ u\in D_{\alpha,\beta}^0 \,\, : \,\, \lim_{x\rightarrow0} (A_{\alpha,\beta}(x)\cdot(\varphi_{\lambda_1}^{\alpha,\beta}(x)\overline{u'(x)}-\left(\frac{d}{dx}\varphi_{\lambda_1}^{\alpha,\beta}(x)\right)\overline{u(x)}))=0 \}.
\end{equation*}
\end{prop}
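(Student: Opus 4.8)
The plan is to treat $\Delta_{\alpha,\beta}=A_{\alpha,\beta}^{-1}(A_{\alpha,\beta}\,\cdot\,')'$ as a singular Sturm--Liouville operator in the weighted space $L^2(\mu)$ and to apply Weyl's limit-point/limit-circle theory at the two singular endpoints $x=0$ and $x=\infty$. First I would record the Lagrange (Green) identity: for $u,v\in D^0_{\alpha,\beta}$ and $0<a<b<\infty$,
\[
\int_a^b\bigl(\Delta_{\alpha,\beta}u\,\overline v - u\,\overline{\Delta_{\alpha,\beta}v}\bigr)\,d\mu_{\alpha,\beta}
= \Bigl[A_{\alpha,\beta}(x)\bigl(u'(x)\overline{v(x)} - u(x)\overline{v'(x)}\bigr)\Bigr]_a^b .
\]
This identifies the sole obstruction to symmetry as the boundary form $W_x[u,v]:=A_{\alpha,\beta}(x)\bigl(u'\overline v - u\overline{v'}\bigr)$ evaluated at $0$ and $\infty$, and it is precisely this Wronskian-type bracket that appears in the definition of $D_{\alpha,\beta}$. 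Self-adjointness then follows once I show the form vanishes automatically at each endpoint that is limit-point, and impose, at each limit-circle endpoint, one boundary condition making the form vanish there (a maximal isotropic condition on the deficiency space).

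Second I would analyse the endpoint at infinity. From $\sinh x,\cosh x\sim\tfrac12 e^{x}$ one gets $A_{\alpha,\beta}(x)\sim e^{2\rho x}$ and $A'_{\alpha,\beta}/A_{\alpha,\beta}\to 2\rho$, so the eigenvalue equation $\Delta_{\alpha,\beta}u+(\lambda^2+\rho^2)u=0$ is asymptotically $u''+2\rho u'+(\lambda^2+\rho^2)u=0$, with characteristic roots $-\rho\pm i\lambda$ and fundamental solutions $u\sim e^{(-\rho\pm i\lambda)x}$. Choosing the spectral parameter with $\mathrm{Im}\,\lambda\neq0$, these satisfy $|u|^2\,d\mu_{\alpha,\beta}\sim e^{\mp 2\,\mathrm{Im}(\lambda)\,x}\,dx$ near $\infty$, so exactly one is square-integrable. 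Hence $\infty$ is always a limit-point endpoint, and by the Lagrange identity $W_x[u,v]\to0$ there automatically for all $u,v\in D^0_{\alpha,\beta}$.

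Third, and this is the decisive computation, I would classify the endpoint $x=0$ by a Frobenius analysis. Since $A_{\alpha,\beta}(x)\sim 2^{2\rho}x^{2\alpha+1}$ as $x\to0$, the leading balance $(A_{\alpha,\beta}u')'\approx0$ has indicial exponents $\sigma=0$ and $\sigma=-2\alpha$, giving fundamental solutions $u_1\sim 1$ and $u_2\sim x^{-2\alpha}$ (a logarithm when $\alpha=0$). Testing against $d\mu_{\alpha,\beta}\sim c\,x^{2\alpha+1}dx$, the regular solution $u_1$ always lies in $L^2(\mu)$ near $0$, whereas $\int_0 |u_2|^2\,d\mu_{\alpha,\beta}\sim\int_0 x^{1-2\alpha}\,dx$ converges if and only if $\alpha<1$, i.e. $2\alpha+1<3$ (at $\alpha=1$ the integral $\int_0 x^{-1}dx$ diverges, so the threshold is sharp). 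Thus $0$ is limit-circle exactly when $2\alpha+1<3$ and limit-point when $2\alpha+1\geq3$. In the latter case both endpoints are limit-point, the deficiency indices are $(0,0)$, $\Delta_{\alpha,\beta}$ is already self-adjoint on $D^0_{\alpha,\beta}$, and $D_{\alpha,\beta}=D^0_{\alpha,\beta}$. In the limit-circle case the deficiency indices are $(1,1)$, the self-adjoint realizations form a one-parameter family, and each is selected by requiring $W_x[u,\cdot]\to0$ at $0$ against one fixed reference solution; taking that reference to be $\varphi_{\lambda_1}^{\alpha,\beta}$ with $\mathrm{Im}\,\lambda_1^2>0$ yields exactly the displayed condition
\[
\lim_{x\to0}A_{\alpha,\beta}(x)\Bigl(\varphi_{\lambda_1}^{\alpha,\beta}(x)\overline{u'(x)}-\bigl(\tfrac{d}{dx}\varphi_{\lambda_1}^{\alpha,\beta}(x)\bigr)\overline{u(x)}\Bigr)=0
\]
defining $D_{\alpha,\beta}\subsetneq D^0_{\alpha,\beta}$. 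Finally, since the regular solution — in particular $\varphi_{\lambda}^{\alpha,\beta}$, analytic on $[0,\infty)$ with $\varphi_{\lambda}^{\alpha,\beta}(0)=1$ — behaves like $u_1\sim1$ and so lies in the domain, the claim that $\Delta_{\alpha,\beta}$ contains functions differentiable at zero is immediate.

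The bulk of the argument is the endpoint asymptotics and the Weyl dichotomy, which are routine once the exponents $\sigma=0,-2\alpha$ and the weight $x^{2\alpha+1}$ are in hand. The genuinely delicate point I expect to require the most care is the limit-circle case: I must verify that the Wronskian condition against $\varphi_{\lambda_1}^{\alpha,\beta}$ cuts the domain down by \emph{exactly} one dimension, i.e. that it defines a maximal isotropic subspace of the boundary form $W_0[\cdot,\cdot]$, so the resulting operator is neither too large (merely symmetric) nor too small (a proper symmetric restriction). Checking this maximality, together with confirming that the choice $\mathrm{Im}\,\lambda_1^2>0$ renders the boundary condition admissible for self-adjointness, is the step I would treat most carefully; the remaining integrability estimates are straightforward.
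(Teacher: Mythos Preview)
The paper does not prove this proposition; it is quoted verbatim from Flensted--Jensen \cite{FJ72} as background, with no argument supplied. Your outline via Weyl's limit-point/limit-circle theory is precisely the standard route (and the one Flensted--Jensen himself takes): the Lagrange identity isolating the boundary Wronskian $W_x[u,v]=A_{\alpha,\beta}(x)(u'\overline v-u\overline{v'})$, the asymptotics $A_{\alpha,\beta}\sim e^{2\rho x}$ at infinity giving exactly one $L^2(\mu)$ solution for non-real spectral parameter, the Frobenius exponents $\sigma=0,-2\alpha$ at $x=0$ with the integrability test $\int_0 x^{1-2\alpha}\,dx$ yielding the threshold $2\alpha+1=3$, and the selection of a self-adjoint extension in the limit-circle case by a single Wronskian condition against a reference solution $\varphi_{\lambda_1}^{\alpha,\beta}$ with $\mathrm{Im}\,\lambda_1^2>0$ --- all of this is correct and matches the original.
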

\begin{prop}(\cite[Proposition 3, p. 146]{FJ72}) \label{Norm-preserving map}
For $f\in L^2(\mu)$ and $\lambda\in\mathbb{R}^+$ define $\widehat{f}$ the integral converging in $L^2(\nu)$. $f\rightarrow\widehat{f}$ is a linear, normpreserving map of $L^2(\mu)$ onto $L^2(\nu)$, the inverse given by
\begin{equation*}
    f(x) = \int_0^\infty g(\lambda)\varphi_{\lambda}^{\alpha,\beta}(x) d\nu_{\alpha,\beta}(\lambda)
\end{equation*}
the integral converging in $L^2(\mu)$. A function $f\in L^2(\mu)$ belongs to $D_{\alpha,\beta}$ if and only if $(\lambda^2+\rho^2)\widehat{f}(\lambda)\in L^2(\nu)$ and in that case
\begin{equation*}\label{EQ: 1.19}
    \widehat{\Delta_{\alpha,\beta}f}(\lambda) = - (\lambda^2+\rho^2)\widehat{f}(\lambda).
\end{equation*}
\end{prop}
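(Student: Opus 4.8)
The plan is to realise $\Delta_{\alpha,\beta}$ as a self-adjoint singular Sturm--Liouville operator and to identify its spectral resolution with the Fourier--Jacobi transform \eqref{Jacobi Transform}. First I would work on the dense subspace $\mathcal{S}_e^2(\mathbb{R})\subset L^2(\mu)$ (density being standard for such Schwartz-type spaces), on which both $\widehat{f}$ and the inverse integral \eqref{Invers Jacobi Transform} converge absolutely thanks to the pointwise bounds on $\varphi_\lambda^{\alpha,\beta}$ and its derivatives recorded above. All the analytic subtleties are confined to this subspace, and the full $L^2$ statement then follows by continuity.

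The core of the argument is the inversion formula on $\mathcal{S}_e^2(\mathbb{R})$, which I would obtain from Weyl--Titchmarsh theory. The equation $\Delta_{\alpha,\beta}\varphi+(\lambda^2+\rho^2)\varphi=0$ admits the regular solution $\varphi_\lambda^{\alpha,\beta}$ normalised at $0$ and a second, Jost-type solution $\Phi_\lambda$ distinguished by its decay as $x\to\infty$; the limit-point classification at $\infty$ (stated in the preceding Proposition) makes $\Phi_\lambda$ the unique $L^2$-solution there. The resolvent kernel of $\Delta_{\alpha,\beta}$ is the Green's function built from $\varphi_\lambda^{\alpha,\beta}$ and $\Phi_\lambda$, whose Wronskian is essentially $c_{\alpha,\beta}(\lambda)^{-1}$. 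Evaluating the jump of this resolvent across the continuous spectrum $(-\infty,-\rho^2]$ --- equivalently, applying Stone's formula --- produces the spectral density $|c_{\alpha,\beta}(\lambda)|^{-2}$, i.e. precisely $d\nu_{\alpha,\beta}$. This gives, for $f\in\mathcal{S}_e^2(\mathbb{R})$,
\begin{equation*}
    f(x)=\int_0^\infty\widehat{f}(\lambda)\,\varphi_\lambda^{\alpha,\beta}(x)\,d\nu_{\alpha,\beta}(\lambda),
\end{equation*}
and, by pairing $f$ against this representation, the Parseval identity $\|f\|_{2,\mu}=\|\widehat{f}\|_{2,\nu}$. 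When $2\alpha+1<3$ the endpoint $0$ is limit-circle, and the restriction to the domain $D_{\alpha,\beta}$ fixed in the previous Proposition is exactly what selects the self-adjoint realisation whose spectral measure is this $\nu_{\alpha,\beta}$, so no extra boundary or point-spectrum terms intrude.

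With Parseval established on a dense set, $f\mapsto\widehat{f}$ extends uniquely to a norm-preserving, hence injective, operator $\mathcal{F}:L^2(\mu)\to L^2(\nu)$ with closed range. Using $\overline{\varphi_\lambda^{\alpha,\beta}(x)}=\varphi_\lambda^{\alpha,\beta}(x)$ for $\lambda\in\mathbb{R}^+$, a direct computation identifies the adjoint $\mathcal{F}^*$ with the inverse integral \eqref{Invers Jacobi Transform}; the inversion formula then reads $\mathcal{F}^*\mathcal{F}=\mathrm{id}$ on the dense subspace, so $\mathcal{F}^*$ is itself injective, the range of $\mathcal{F}$ is dense, and being closed it is all of $L^2(\nu)$. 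For the domain statement I would integrate by parts twice in $\int_0^\infty\Delta_{\alpha,\beta}f(x)\,\varphi_\lambda^{\alpha,\beta}(x)\,d\mu_{\alpha,\beta}(x)$, writing $\Delta_{\alpha,\beta}$ in the form $A_{\alpha,\beta}^{-1}(A_{\alpha,\beta}u')'$: the boundary terms at $0$ and $\infty$ vanish by the decay built into $\mathcal{S}_e^2$ and the behaviour of $A_{\alpha,\beta}$ and $\varphi_\lambda^{\alpha,\beta}$ at the endpoints, while the eigenfunction equation turns the integral into $-(\lambda^2+\rho^2)\widehat{f}(\lambda)$. Thus $\mathcal{F}$ conjugates $\Delta_{\alpha,\beta}$ into multiplication by $-(\lambda^2+\rho^2)$ on a core, and by self-adjointness the two closures agree; since the domain of a multiplication operator on $L^2(\nu)$ consists of exactly those $g$ with $(\lambda^2+\rho^2)g\in L^2(\nu)$, we conclude $f\in D_{\alpha,\beta}$ iff $(\lambda^2+\rho^2)\widehat{f}\in L^2(\nu)$, with the stated spectral identity.

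The main obstacle is the spectral-density computation in the second step: deriving the asymptotics of the two solutions near $x=\infty$, constructing the Jost solution $\Phi_\lambda$, and evaluating the Wronskian to recover $c_{\alpha,\beta}(\lambda)$ and hence $|c_{\alpha,\beta}(\lambda)|^{-2}$ as the spectral weight. This is exactly the delicate Paley--Wiener / Weyl--Titchmarsh analysis for the hypergeometric equation, and ensuring that the limit-circle endpoint at $0$ is handled so that the prescribed realisation $D_{\alpha,\beta}$ matches the measure $\nu_{\alpha,\beta}$ requires care. Everything after that --- the density extension, surjectivity, and the integration-by-parts identity --- is soft functional analysis.
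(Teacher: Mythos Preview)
The paper does not prove this proposition at all: it is quoted verbatim from Flensted--Jensen \cite[Proposition~3, p.~146]{FJ72} and used as a black box, so there is no ``paper's own proof'' to compare against. Your sketch via Weyl--Titchmarsh theory --- building the resolvent from the regular and Jost solutions, reading off the spectral measure $|c_{\alpha,\beta}(\lambda)|^{-2}$ from Stone's formula, and then extending Parseval from a Schwartz-type core --- is in fact the standard route and is essentially how the result is obtained in \cite{FJ72}; the outline is sound and the obstacle you flag (the asymptotics yielding $c_{\alpha,\beta}$ and the limit-circle bookkeeping at $0$) is exactly where the work lies. For the purposes of the present paper, however, none of this is needed: the proposition is an imported preliminary, not something the authors establish.
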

In particularly, we have for Plancherel's identity 
\begin{equation}
\label{Plancherel's identity}
\|\widehat{f}\|_{2,\nu}=\|f\|_{2,\mu}.
\end{equation} 
\begin{rem} For $\alpha = \beta =-\frac{1}{2}$, we have the Fourier-cosine transform
\begin{equation*}
    \widehat{f}_{c}(\lambda)=(\mathcal{F}_{c}f)(\lambda)=\frac{1}{\sqrt{2\pi}}\int_0^{\infty}\cos(\lambda x)f(x)dx,
\end{equation*}
and the inverse Fourier-cosine transform is defined by
\begin{equation*}
    \Bigl(\mathcal{F}_{c}^{-1}g\Bigr)(x)=\frac{4}{\sqrt{2\pi}}\int_0^{\infty}\cos(\lambda x)g(\lambda)d\lambda.
\end{equation*}
\end{rem}
\begin{defn}We define the space
\begin{equation*}
    \mathcal{H} := \{ u\in L^2(\mu) \,\, : \,\, (\cdot^2+\rho^2)\widehat{u}\in L^2(\nu) \}
\end{equation*}
with norm
\begin{equation*}
    \|u\|_{\mathcal{H}}^2 := \int_0^\infty |(\lambda^2+\rho^2)\widehat{u}(\lambda)|^2d\nu_{\alpha,\beta}(\lambda).
\end{equation*}
\end{defn}

\subsection{Fractional differentiation operators}

In this subsection, we introduce fractional differentiation operators and other conceptions.

\begin{defn} \cite[p. 69]{KST06}
Let $[a, b]$ $(-\infty<a<b<\infty)$ be a finite interval on the real axis $\mathbb{R}$. The left and right Riemann-Liouville fractional integrals $I_{a^+}^{\gamma}$ and $I_{b^-}^{\gamma}$ of order $\gamma\in\mathbb{R}$ $(\gamma>0)$ are defined by
\begin{equation*}
    I_{a^+}^{\gamma}[f](t) := \frac{1}{\Gamma(\gamma)} \int_a^t (t-s)^{\gamma-1}f(s)ds, \quad t\in(a,b],
\end{equation*}
and
\begin{equation*}
    I_{b^-}^{\gamma}[f](t) := \frac{1}{\Gamma(\gamma)} \int_t^b (t-s)^{\gamma-1}f(s)ds, \quad t\in[a,b),
\end{equation*}
respectively. Here $\Gamma$ denotes the Euler gamma function.
\end{defn}
\begin{defn} \cite[p. 70]{KST06}
The left and right Riemann-Liouville fractional derivatives $D_{a^+}^{\gamma}$ and $D_{b^-}^{\gamma}$ of order $\gamma\in\mathbb{R}$ $(0<\gamma<1)$ are given by
\begin{equation*}
    D_{a^+}^{\gamma}[f](t) := \frac{d}{dt} I_{a^+}^{1-\gamma}[f](t), \quad \forall t\in(a,b],
\end{equation*}
and
\begin{equation*}
    D_{b^-}^{\gamma}[f](t) := -\frac{d}{dt} I_{b^-}^{1-\gamma}[f](t), \quad \forall t\in[a,b),
\end{equation*}
respectively. 
\end{defn}
\begin{defn} \cite[p. 91]{KST06}
The left and right Caputo fractional derivatives $D_{a^+}^{\gamma}$ and $D_{b^-}^{\gamma}$ of order $\gamma\in\mathbb{R}$ $(0<\gamma<1)$ are defined by
\begin{equation*}
    \mathcal{D}_{a^+}^{\gamma}[f](t) := D_{a^+}^{\gamma}[f(t)-f(a)], \quad t\in(a,b],
\end{equation*}
and
\begin{equation*}
    \mathcal{D}_{b^-}^{\gamma}[f](t) := D_{b^-}^{\gamma}[f(t)-f(b)], \quad t\in[a,b),
\end{equation*}
respectively. 
\end{defn}
\begin{defn} \cite[p. 18, Definition 3]{CF18} \label{DefinitionSpace}
Let $X$ be a Banach space. We say that $u\in C^\gamma([0,T],X)$ if $u\in C([0,T],X)$ and $\mathcal{D}_t^\gamma u\in C([0,T],X)$.
\end{defn}

The classical Mittag-Leffler function  $\mathbb{E}_{\gamma,1}(t)$ and the Mittag-Leffler type function $\mathbb{E}_{\gamma,\gamma}(t)$ are given by the expressions
\begin{equation*}
    \mathbb{E}_{\gamma,1}(t) := \sum_{k=0}^\infty \frac{t^k}{\Gamma(\gamma k+1)} \quad \mathbb{E}_{\gamma,\gamma}(t) := \sum_{k=0}^\infty \frac{t^k}{\Gamma(\gamma k+\gamma)}.
\end{equation*}
In the case $\gamma=1$, we obtain $\mathbb{E}_{1,1}(t)=e^t$. For more information about the classical Mittag-Leffler function $\mathbb{E}_{\gamma,1}(t)$ and the Mittag-Leffler type function $\mathbb{E}_{\gamma,\gamma}(t)$ see e.g. \cite[p. 40 and p. 42]{KST06}.

In \cite[Theorem 4, p. 21]{Sim14} the following estimate for the Mittag-Leffler function is proved, when $0<\gamma<1$ (not true for $\gamma\geq1$)
\begin{equation*}
    \frac{1}{1+\Gamma(1-\gamma)t}\leq\mathbb{E}_{\gamma,1}(-t)\leq\frac{1}{1+\Gamma(1+\gamma)^{-1}t}, \quad t>0.
\end{equation*}
Then it follows 
\begin{equation} \label{InEq}
    0< \mathbb{E}_{\gamma,1}(-t) <1, \quad t>0.
\end{equation}

\begin{prop}\cite{Pod99}
If $0<\gamma<2$, $\beta$ is an arbitrary real number, $\mu$ is such that $\pi\gamma/2<\mu<\min\{\pi,\pi\gamma\}$, then there exists positive constant $C$, such that we have
\begin{equation*}
    \left| \mathbb{E}_{\gamma,\beta}(z) \right| \leq \frac{C}{1+|z|}
\end{equation*}
for all $\mu\leq|\arg(z)|\leq\pi$.
\end{prop}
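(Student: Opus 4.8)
The plan is to reduce the inverse source problem to a decoupled family of scalar fractional ODEs by means of the Fourier–Jacobi transform $\mathcal{F}_{\alpha,\beta}$, solve each ODE explicitly, close the system using the over-determination condition $\widehat{u}(T,\lambda)=\widehat{\psi}(\lambda)$, and finally invert the transform and verify the claimed function-space memberships. First I would apply $\mathcal{F}_{\alpha,\beta}$ in the spatial variable to the equation. Using Proposition \ref{Norm-preserving map}, namely $\widehat{\Delta_{\alpha,\beta}u}(\lambda)=-(\lambda^2+\rho^2)\widehat{u}(\lambda)$, and the fact that $\mathbb{D}_{0^+,t}^\gamma$ (acting in $t$) commutes with the spatial transform, the PDE becomes, for each fixed $\lambda$,
\[\bigl(1+a(\lambda^2+\rho^2)\bigr)\,\mathbb{D}_{0^+,t}^\gamma\widehat{u}(t,\lambda)+(\lambda^2+\rho^2+m)\,\widehat{u}(t,\lambda)=\widehat{f}(\lambda),\]
that is, with $\kappa(\lambda):=\frac{\lambda^2+\rho^2+m}{1+a(\lambda^2+\rho^2)}$,
\[\mathbb{D}_{0^+,t}^\gamma\widehat{u}(t,\lambda)+\kappa(\lambda)\widehat{u}(t,\lambda)=\frac{\widehat{f}(\lambda)}{1+a(\lambda^2+\rho^2)},\qquad\widehat{u}(0,\lambda)=\widehat{\phi}(\lambda).\]
Since the forcing is constant in $t$ and the Caputo derivative annihilates constants, the particular solution is the constant $\tfrac{\widehat f}{(\lambda^2+\rho^2+m)}$, and the unique solution is
\[\widehat{u}(t,\lambda)=\widehat{\phi}(\lambda)\,\mathbb{E}_{\gamma,1}(-\kappa(\lambda)t^\gamma)+\frac{\widehat{f}(\lambda)}{\lambda^2+\rho^2+m}\bigl(1-\mathbb{E}_{\gamma,1}(-\kappa(\lambda)t^\gamma)\bigr),\]
where I used $\bigl(1+a(\lambda^2+\rho^2)\bigr)\kappa(\lambda)=\lambda^2+\rho^2+m$.

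Next I would impose $\widehat{u}(T,\lambda)=\widehat{\psi}(\lambda)$ and solve the resulting scalar equation for $\widehat{f}(\lambda)$. The crucial structural point is that the denominator $1-\mathbb{E}_{\gamma,1}(-\kappa(\lambda)T^\gamma)$ never vanishes: for $0<\gamma<1$ this follows from \eqref{InEq}, while for $\gamma=1$ it is $1-e^{-\kappa(\lambda)T^\gamma}>0$. Solving gives
\[\widehat{f}(\lambda)=(\lambda^2+\rho^2+m)\,\frac{\widehat{\psi}(\lambda)-\widehat{\phi}(\lambda)\mathbb{E}_{\gamma,1}(-\kappa(\lambda)T^\gamma)}{1-\mathbb{E}_{\gamma,1}(-\kappa(\lambda)T^\gamma)},\]
and substituting back, after simplifying the coefficient of $\widehat{\phi}(\lambda)$ (which collapses to $\frac{\mathbb{E}_{\gamma,1}(-\kappa T^\gamma)-\mathbb{E}_{\gamma,1}(-\kappa t^\gamma)}{1-\mathbb{E}_{\gamma,1}(-\kappa T^\gamma)}$ after combining fractions), reproduces the stated transform of $u$. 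Applying the inverse transform \eqref{Invers Jacobi Transform} and writing $\widehat{\psi}(\lambda)=\int_0^\infty\psi(y)\varphi_\lambda^{\alpha,\beta}(y)\,d\mu_{\alpha,\beta}(y)$ (and likewise for $\phi$) yields exactly the two double-integral representations in the statement.

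The main obstacle is verifying that these formulas land in the claimed spaces, $f\in L^2(\mu)$ and $u\in C^\gamma([0,T],L^2(\mu))\cap C([0,T],\mathcal{H})$. By Plancherel's identity \eqref{Plancherel's identity} this reduces to $L^2(\nu)$-estimates on $\widehat{f}$ and on $(\lambda^2+\rho^2)\widehat{u}$, and the key quantity to control is a \emph{uniform} lower bound $\inf_{\lambda\ge0}\bigl(1-\mathbb{E}_{\gamma,1}(-\kappa(\lambda)T^\gamma)\bigr)>0$. I would obtain this from the monotonicity of $s\mapsto\mathbb{E}_{\gamma,1}(-s)$ together with $\kappa(\lambda)\ge\kappa_{\min}>0$; the latter follows from a short analysis of $\kappa$ as a function of $\lambda^2$, whose monotonicity is governed by the sign of $1-am$, so the extreme values are attained at $\lambda=0$ or as $\lambda\to\infty$ and are strictly positive. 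Granting this, $|\widehat{f}(\lambda)|\lesssim(\lambda^2+\rho^2+m)\bigl(|\widehat{\psi}(\lambda)|+|\widehat{\phi}(\lambda)|\bigr)$ gives $\widehat{f}\in L^2(\nu)$ because $\psi,\phi\in\mathcal{H}$, and the analogous bound gives $u(t,\cdot)\in\mathcal{H}$ uniformly in $t$.

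Finally, for the time regularity I would use continuity of $t\mapsto\mathbb{E}_{\gamma,1}(-\kappa(\lambda)t^\gamma)$ together with dominated convergence (justified by the uniform bounds above) for the $C([0,T],\cdot)$ statements, and read off $\mathbb{D}_{0^+,t}^\gamma\widehat{u}(t,\lambda)=-\kappa(\lambda)\widehat{u}(t,\lambda)+\widehat{f}(\lambda)/\bigl(1+a(\lambda^2+\rho^2)\bigr)$ directly from the ODE to confirm $\mathcal{D}_t^\gamma u\in C([0,T],L^2(\mu))$, hence $u\in C^\gamma([0,T],L^2(\mu))$ in the sense of Definition \ref{DefinitionSpace}. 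Uniqueness is then immediate: the difference of two solution pairs satisfies the homogeneous problem ($\widehat{v}(0,\lambda)=0$, source $\widehat{g}(\lambda)$), so $\widehat{v}(t,\lambda)=\frac{\widehat{g}(\lambda)}{\lambda^2+\rho^2+m}\bigl(1-\mathbb{E}_{\gamma,1}(-\kappa(\lambda)t^\gamma)\bigr)$, and evaluating at $t=T$ forces $\widehat{g}\equiv0$ by the nonvanishing of $1-\mathbb{E}_{\gamma,1}(-\kappa(\lambda)T^\gamma)$; thus $f$ and then $u$ coincide.
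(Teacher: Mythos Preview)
Your proposal does not address the stated proposition at all. The statement in question is the classical asymptotic estimate for the Mittag--Leffler function, $\left|\mathbb{E}_{\gamma,\beta}(z)\right|\le C/(1+|z|)$ in the sector $\mu\le|\arg z|\le\pi$, which the paper simply \emph{cites} from Podlubny \cite{Pod99} without proof. What you have written is instead a proof sketch of Theorem~\ref{Theorem2}, the main inverse-source-problem theorem. These are two entirely different statements: the proposition is a technical tool about the behaviour of $\mathbb{E}_{\gamma,\beta}$, while the theorem is the well-posedness result for the ISP.

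If your intent was to prove the proposition itself, the argument you gave is irrelevant; the standard proof proceeds via an integral representation of $\mathbb{E}_{\gamma,\beta}$ over a Hankel-type contour and contour deformation in the indicated sector, as in \cite{Pod99}. If instead you meant to prove Theorem~\ref{Theorem2}, then your sketch is essentially the paper's own approach (Fourier--Jacobi transform, solve the scalar Caputo ODE, impose the terminal condition, invert, and estimate via Plancherel and Lemma~\ref{lemma}), but you have attached it to the wrong statement.
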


\begin{lemma} \label{lemma}
Assume that $0<t<T$, $0<\gamma\leq1$ and $\lambda\in\mathbb{R}^+$. Then 
\begin{equation} \label{InEq1}
    0 < \frac{1-\mathbb{E}_{\gamma,1}\left(-\lambda t^\gamma\right)}{1-\mathbb{E}_{\gamma,1}\left(-\lambda T^\gamma\right)} < 1
\end{equation}
and 
\begin{equation} \label{InEq2}
    -1 < \frac{\mathbb{E}_{\gamma,1}\left(-\lambda T^\gamma\right)-\mathbb{E}_{\gamma,1}\left(-\lambda t^\gamma\right)}{1-\mathbb{E}_{\gamma,1}\left(-\lambda T^\gamma\right)} < 0
\end{equation}
inequalities hold.
\end{lemma}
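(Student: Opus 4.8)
The plan is to reduce both inequalities \eqref{InEq1}--\eqref{InEq2} to two elementary properties of the single-variable function
\begin{equation*}
h(\tau):=\mathbb{E}_{\gamma,1}\left(-\lambda\tau^{\gamma}\right),\qquad \tau>0 ,
\end{equation*}
namely (i) $0<h(\tau)<1$ for every $\tau>0$, and (ii) $h$ is strictly decreasing on $(0,\infty)$. Granting these, the rest is pure sign bookkeeping: the quotients appearing in \eqref{InEq1} and \eqref{InEq2} are $\dfrac{1-h(t)}{1-h(T)}$ and $\dfrac{h(T)-h(t)}{1-h(T)}$, and the identity
\begin{equation*}
\frac{h(T)-h(t)}{1-h(T)}=\frac{\bigl(1-h(t)\bigr)-\bigl(1-h(T)\bigr)}{1-h(T)}=\frac{1-h(t)}{1-h(T)}-1
\end{equation*}
shows that \eqref{InEq2} is obtained from \eqref{InEq1} merely by subtracting $1$. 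Hence it is enough to prove $0<\dfrac{1-h(t)}{1-h(T)}<1$.

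For (i): since $\lambda\in\mathbb{R}^{+}$ and $\tau>0$ we have $\lambda\tau^{\gamma}>0$, so for $0<\gamma<1$ property (i) is exactly the estimate \eqref{InEq}, while for $\gamma=1$ it is the trivial bound $0<e^{-\lambda\tau^{\gamma}}<1$. For (ii): when $\gamma=1$, $h(\tau)=e^{-\lambda\tau}$ is obviously strictly decreasing; when $0<\gamma<1$ one uses the classical complete monotonicity of $s\mapsto\mathbb{E}_{\gamma,1}(-s)$ on $[0,\infty)$ (equivalently, the differentiation formula $h'(\tau)=-\lambda\tau^{\gamma-1}\mathbb{E}_{\gamma,\gamma}\left(-\lambda\tau^{\gamma}\right)<0$ together with the positivity of $\mathbb{E}_{\gamma,\gamma}$ on $(-\infty,0]$). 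I expect this recollection of monotonicity to be the only non-routine point: note that \eqref{InEq} by itself does not give (ii), because the two-sided bound there cannot separate $h(t)$ from $h(T)$ when $t$ is close to $T$, so the monotonicity is the genuine input.

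With (i)--(ii) at hand the conclusion follows. By (i) at $\tau=t$ and $\tau=T$, both $1-h(t)$ and $1-h(T)$ are strictly positive, hence $\dfrac{1-h(t)}{1-h(T)}>0$. Since $0<t<T$ and $\tau\mapsto\tau^{\gamma}$ is increasing (as $\gamma>0$), we get $\lambda t^{\gamma}<\lambda T^{\gamma}$, so (ii) yields $h(t)>h(T)$, i.e.\ $1-h(t)<1-h(T)$; dividing by the positive number $1-h(T)$ gives $\dfrac{1-h(t)}{1-h(T)}<1$. This is \eqref{InEq1}, and the displayed identity then immediately yields $-1<\dfrac{h(T)-h(t)}{1-h(T)}<0$, which is \eqref{InEq2}.
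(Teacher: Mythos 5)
Your proof is correct, and it actually takes a different --- and sounder --- route than the paper's. The paper tries to obtain the upper bound in \eqref{InEq1} using only the two-sided bound \eqref{InEq}: it writes the chain
\begin{equation*}
0 < 1-\mathbb{E}_{\gamma,1}\left(-\lambda t^\gamma\right) < \frac{1-\mathbb{E}_{\gamma,1}\left(-\lambda t^\gamma\right)}{1-\mathbb{E}_{\gamma,1}\left(-\lambda T^\gamma\right)} < \frac{1}{1-\mathbb{E}_{\gamma,1}\left(-\lambda T^\gamma\right)} < 1,
\end{equation*}
whose final link $\frac{1}{1-\mathbb{E}_{\gamma,1}(-\lambda T^\gamma)}<1$ directly contradicts the paper's own inequality \eqref{InEq3} and is false; so the published argument does not actually establish the upper bound. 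Your observation that \eqref{InEq} alone cannot separate $\mathbb{E}_{\gamma,1}(-\lambda t^\gamma)$ from $\mathbb{E}_{\gamma,1}(-\lambda T^\gamma)$ pinpoints exactly what is missing, and your substitute input --- strict monotonicity of $\tau\mapsto\mathbb{E}_{\gamma,1}(-\lambda\tau^\gamma)$, via complete monotonicity or via the derivative identity $h'(\tau)=-\lambda\tau^{\gamma-1}\mathbb{E}_{\gamma,\gamma}(-\lambda\tau^\gamma)<0$ (the same differentiation formula the paper uses later in Theorem \ref{Theorem1}) --- is precisely the ingredient needed to make the lemma true as stated. The reduction of \eqref{InEq2} to \eqref{InEq1} by subtracting $1$ is identical to the paper's. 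In short: the paper's approach is shorter but has a genuine gap at the key step, while yours costs one classical fact about the Mittag--Leffler function and closes that gap.
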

\begin{proof}
Using property \eqref{InEq} we have
\begin{equation*}
    0 < 1-\mathbb{E}_{\gamma,1}\left(-\lambda T^\gamma\right) < 1
\end{equation*}
or
\begin{equation} \label{InEq3}
    1 < \frac{1}{1-\mathbb{E}_{\gamma,1}\left(-\lambda T^\gamma\right)}.
\end{equation}
Then multiplying both sides of the inequality \eqref{InEq3} by $1-\mathbb{E}_{\gamma,1}\left(-\lambda t^\gamma\right)$ we obtain
\begin{equation*}
    0 < 1-\mathbb{E}_{\gamma,1}\left(-\lambda t^\gamma\right) < \frac{1-\mathbb{E}_{\gamma,1}\left(-\lambda t^\gamma\right)}{1-\mathbb{E}_{\gamma,1}\left(-\lambda T^\gamma\right)} < \frac{1}{1-\mathbb{E}_{\gamma,1}\left(-\lambda T^\gamma\right)} < 1
\end{equation*}
and these inequalities imply \eqref{InEq1}. Rewriting the expression 
\begin{equation*}
    \frac{\mathbb{E}_{\gamma,1}\left(-\lambda T^\gamma\right)-\mathbb{E}_{\gamma,1}\left(-\lambda t^\gamma\right)}{1-\mathbb{E}_{\gamma,1}\left(-\lambda T^\gamma\right)} = \frac{1-\mathbb{E}_{\gamma,1}\left(-\lambda t^\gamma\right)}{1-\mathbb{E}_{\gamma,1}\left(-\lambda T^\gamma\right)} - 1
\end{equation*}
and using the first inequality \eqref{InEq1} we obtain the second inequality \eqref{InEq2}.
\end{proof}

\section{Main Results}
In this Section we deal with the direct problem for the time-fractional pseudo-parabolic equation associated with the Jacobi operator $\Delta_{\alpha,\beta}$ \eqref{EQ: 1.1}. Moreover, ISPs are subject to study. The existence, uniqueness and stability results are established.

\subsection{The direct problem for the time-fractional pseudo-parabolic equation with the Jacobi operator} Let $0<\gamma\leq1$. We consider the non-homogeneous time-fractional pseudo-parabolic equation
\begin{equation}
\label{Heat equation}
\mathbb{D}_{0^+,t}^\gamma\left( u(t,x)-a\Delta_{\alpha,\beta}u(t,x)\right)-\Delta_{\alpha,\beta}u(t,x)+mu(t,x)=f(t,x),\quad (t,x)\in D,
\end{equation}
with initial condition
\begin{equation}\label{Initial date}
u(0,x)=\phi(x),\quad x\in\mathbb{R}^+,
\end{equation}
where the functions $f$ and $\phi$ are given functions. Our aim is to find unique solution $u$ of the problem \eqref{Heat equation} - \eqref{Initial date}.

\begin{theorem}\label{Theorem1} Let $0<\gamma\leq1$ and $\lambda\in\mathbb{R}^+$. Suppose that $f\in C^1([0,T],L^2(\mu))$ and $\phi\in \mathcal{H}$. Then the problem \eqref{Heat equation}-\eqref{Initial date} has
a unique solution $u\in C^\gamma([0,T],L^2(\mu))\cap C([0,T],\mathcal{H})$ and can be represented by formula
\begin{multline*}
    u(t,x)=\int_0^\infty\int_0^\infty\int_0^t (t-\tau)^{\gamma-1} \mathbb{E}_{\gamma,\gamma}\left(-\frac{\lambda^{2}+\rho^2+m}{1+a(\lambda^{2}+\rho^2)}(t-\tau)^\gamma\right)\frac{f(\tau,y)}{1+a(\lambda^{2}+\rho^2)}\\
    \times\varphi_{\lambda}^{\alpha,\beta}(y)\varphi_{\lambda}^{\alpha,\beta}(x)d\tau d\mu_{\alpha,\beta}(y)d\nu_{\alpha,\beta}(\lambda)\\
\end{multline*}
\begin{equation*}
    +\int_0^\infty\int_0^\infty\mathbb{E}_{\gamma,1}\left(-\frac{\lambda^{2}+\rho^2+m}{1+a(\lambda^{2}+\rho^2)}t^\gamma\right)\phi(y)\varphi_{\lambda}^{\alpha,\beta}(y)\varphi_{\lambda}^{\alpha,\beta}(x)d\mu_{\alpha,\beta}(y)d\nu_{\alpha,\beta}(\lambda).
\end{equation*}
\end{theorem}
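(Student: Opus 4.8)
The plan is to diagonalize $\Delta_{\alpha,\beta}$ by the Fourier--Jacobi transform, solve the resulting scalar fractional Cauchy problem in closed form with Mittag-Leffler functions, and then check that the inverse transform of that solution has the claimed regularity, finishing with a short uniqueness argument.

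First I would fix $\lambda\in\mathbb{R}^+$, apply $\mathcal{F}_{\alpha,\beta}$ in $x$ to \eqref{Heat equation}--\eqref{Initial date}, and use Proposition \ref{Norm-preserving map} (so that $\widehat{\Delta_{\alpha,\beta}u}(\lambda)=-(\lambda^2+\rho^2)\widehat{u}(\lambda)$). Writing $v(t,\lambda):=\widehat{u}(t,\lambda)$ and $\omega_\lambda:=\dfrac{\lambda^2+\rho^2+m}{1+a(\lambda^2+\rho^2)}\ge0$, and dividing by $1+a(\lambda^2+\rho^2)>0$, the problem becomes the scalar one
\begin{equation*}
\mathbb{D}_{0^+,t}^\gamma v(t,\lambda)+\omega_\lambda v(t,\lambda)=\frac{\widehat{f}(t,\lambda)}{1+a(\lambda^2+\rho^2)},\qquad v(0,\lambda)=\widehat{\phi}(\lambda).
\end{equation*}
For $0<\gamma\le1$ its unique solution is
\begin{equation*}
v(t,\lambda)=\mathbb{E}_{\gamma,1}(-\omega_\lambda t^\gamma)\widehat{\phi}(\lambda)+\frac{1}{1+a(\lambda^2+\rho^2)}\int_0^t(t-\tau)^{\gamma-1}\mathbb{E}_{\gamma,\gamma}(-\omega_\lambda(t-\tau)^\gamma)\widehat{f}(\tau,\lambda)\,d\tau,
\end{equation*}
which I would verify directly from $\mathcal{D}_{0^+,t}^\gamma[\mathbb{E}_{\gamma,1}(-\omega t^\gamma)]=-\omega\,\mathbb{E}_{\gamma,1}(-\omega t^\gamma)$ together with the convolution (Duhamel) rule for the kernel $t^{\gamma-1}\mathbb{E}_{\gamma,\gamma}(-\omega t^\gamma)$ (for $\gamma=1$ this is just $e^{-\omega t}$ and classical variation of constants). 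Inserting $\widehat{\phi}(\lambda)=\int_0^\infty\phi(y)\varphi_\lambda^{\alpha,\beta}(y)\,d\mu_{\alpha,\beta}(y)$ and $\widehat{f}(\tau,\lambda)=\int_0^\infty f(\tau,y)\varphi_\lambda^{\alpha,\beta}(y)\,d\mu_{\alpha,\beta}(y)$ and applying $\mathcal{F}_{\alpha,\beta}^{-1}$ via \eqref{Invers Jacobi Transform} gives exactly the stated representation of $u(t,x)$.

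Next I would show these integrals converge and define $u\in C([0,T],\mathcal H)\cap C^\gamma([0,T],L^2(\mu))$. By Plancherel \eqref{Plancherel's identity} and the definition of $\mathcal H$ it is enough to bound $(\lambda^2+\rho^2)v(t,\lambda)$ and the transform of $\mathbb{D}_{0^+,t}^\gamma(u-a\Delta_{\alpha,\beta}u)$ in $L^2(\nu)$, uniformly for $t\in[0,T]$. For the $\phi$-term, $|\mathbb{E}_{\gamma,1}(-\omega_\lambda t^\gamma)|<1$ by \eqref{InEq} (and $=e^{-\omega_\lambda t}\le1$ if $\gamma=1$), so $|(\lambda^2+\rho^2)\mathbb{E}_{\gamma,1}(-\omega_\lambda t^\gamma)\widehat\phi(\lambda)|\le|(\lambda^2+\rho^2)\widehat\phi(\lambda)|\in L^2(\nu)$ because $\phi\in\mathcal H$. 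For the Duhamel ($f$-)term the decisive point is the identity $\dfrac{d}{d\tau}\mathbb{E}_{\gamma,1}(-\omega_\lambda(t-\tau)^\gamma)=\omega_\lambda(t-\tau)^{\gamma-1}\mathbb{E}_{\gamma,\gamma}(-\omega_\lambda(t-\tau)^\gamma)$, which after one integration by parts in $\tau$ (legitimate since $f\in C^1([0,T],L^2(\mu))$) turns the $f$-term, multiplied by $\lambda^2+\rho^2+m$, into
\begin{equation*}
\widehat f(t,\lambda)-\mathbb{E}_{\gamma,1}(-\omega_\lambda t^\gamma)\widehat f(0,\lambda)-\int_0^t\mathbb{E}_{\gamma,1}(-\omega_\lambda(t-\tau)^\gamma)\,\partial_\tau\widehat f(\tau,\lambda)\,d\tau;
\end{equation*}
using $|\mathbb{E}_{\gamma,1}|\le1$, $\lambda^2+\rho^2\le\lambda^2+\rho^2+m$ and Plancherel, the $\mathcal H$-norm of the $f$-part of $u$ is then dominated by $\|f(t,\cdot)\|_{2,\mu}+\|f(0,\cdot)\|_{2,\mu}+\int_0^T\|\partial_\tau f(\tau,\cdot)\|_{2,\mu}\,d\tau<\infty$. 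The same computation, now with $\mathcal{D}_{0^+,t}^\gamma\mathbb{E}_{\gamma,1}(-\omega_\lambda t^\gamma)=-\omega_\lambda\mathbb{E}_{\gamma,1}(-\omega_\lambda t^\gamma)$, controls $\mathbb{D}_{0^+,t}^\gamma(u-a\Delta_{\alpha,\beta}u)$ in $L^2(\mu)$, and once this numerator is known to lie in $L^2(\nu)$ the factor $1+a(\lambda^2+\rho^2)$ may be divided out to see $\mathbb{D}_{0^+,t}^\gamma u\in C([0,T],L^2(\mu))$; continuity in $t$ in both norms then follows from dominated convergence with the $\lambda$-uniform majorants just produced and the continuity of $t\mapsto\mathbb{E}_{\gamma,1}(-\omega_\lambda t^\gamma)$ and $t\mapsto\widehat f(t,\lambda)$. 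The bound $|\mathbb{E}_{\gamma,\gamma}(z)|\le C/(1+|z|)$ quoted earlier is an alternative tool for the Duhamel estimate. I expect this step to be the main obstacle: justifying the termwise differentiation under the two integral signs and producing the $\lambda$-uniform integrable majorants, and it is the integration-by-parts identity above that removes the apparent loss of two spatial derivatives in the Duhamel term.

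Finally, for uniqueness, if $u_1,u_2$ are two solutions in the stated class then $w=u_1-u_2$ solves \eqref{Heat equation} with $f\equiv0$ and $w(0,\cdot)=0$; transforming, $\widehat w(t,\lambda)$ solves $\mathbb{D}_{0^+,t}^\gamma\widehat w+\omega_\lambda\widehat w=0$ with $\widehat w(0,\lambda)=0$, whose only solution is $\widehat w(t,\lambda)\equiv0$ for a.e.\ $\lambda$, and hence $w\equiv0$ by \eqref{Plancherel's identity}. This gives the representation formula together with existence and uniqueness.
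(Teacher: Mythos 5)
Your proposal is correct and follows essentially the same route as the paper: diagonalization by the Fourier--Jacobi transform, the Mittag-Leffler solution of the scalar fractional Cauchy problem, the integration-by-parts identity $\frac{d}{d\tau}\mathbb{E}_{\gamma,1}(-\omega_\lambda(t-\tau)^\gamma)=\omega_\lambda(t-\tau)^{\gamma-1}\mathbb{E}_{\gamma,\gamma}(-\omega_\lambda(t-\tau)^\gamma)$ to control the Duhamel term in $\mathcal{H}$, and the equation itself to bound $\mathbb{D}_{0^+,t}^{\gamma}u$ in $L^2(\mu)$. Your uniqueness argument via the transformed homogeneous problem is in fact slightly more explicit than the paper's, which simply asserts that the homogeneous problem has only the trivial solution.
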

\begin{proof} We assume that $0<\gamma\leq1$, $\lambda\in\mathbb{R}^+$ and $u(t,\cdot)\in\mathcal{H}$. We first prove that the problem \eqref{Heat equation}-\eqref{Initial date} has only one solution, if the later exists.
Suppose the proposition were false. Assume that there exist two different solutions $u_{1}(t,x)$ and $u_{2}(t,x)$.
Denote $u_{0}(t,x)=u_{1}(t,x)-u_{2}(t,x)$. Then $u_{0}(t,x)$ solves the following equation
\begin{equation}
\label{HomeEquation}
\mathbb{D}_{0^+,t}^\gamma\left( u_0(t,x)-a\Delta_{\alpha,\beta}u_0(t,x)\right)-\Delta_{\alpha,\beta}u_0(t,x)+mu_0(t,x)=0,\quad (t,x)\in D,
\end{equation}
\begin{equation}
\label{ZeroCondition}
u_{0}(0,x)=0,\quad x\in\mathbb{R}^+.
\end{equation}
The problem \eqref{HomeEquation}-\eqref{ZeroCondition} has only trivial solution. This implies uniqueness of the solution.

Now, we will prove the existence of the solutions. Using the Fourier-Jacobi transform
$\mathcal{F}_{\alpha,\beta}$ \eqref{Jacobi Transform} on both sides of \eqref{Heat equation}-\eqref{Initial date}, we have
\begin{equation}
\label{FJEquation}
\mathbb{D}_{0^+,t}^\gamma\widehat{u}(t,\lambda)+\frac{\lambda^{2}+\rho^2+m}{1+a(\lambda^{2}+\rho^2)}\widehat{u}(t,\lambda)=\frac{\widehat{f}(t,\lambda)}{1+a(\lambda^{2}+\rho^2)},
\end{equation}
\begin{equation}
\label{FJCondition}
\widehat{u}(0,\lambda)=\widehat{\phi}(\lambda),
\end{equation}
for all $\lambda\in\mathbb{R}^+$ and $0<t<T$. The solution (see \cite[p. 231, ex. 4.9]{KST06}) of the problem \eqref{FJEquation}-\eqref{FJCondition} is given by 
\begin{multline}\label{JacobiSolution}
    \widehat{u}(t,\lambda) =\int_0^t(t-\tau)^{\gamma-1}\mathbb{E}_{\gamma,\gamma}\left(-\frac{\lambda^{2}+\rho^2+m}{1+a(\lambda^{2}+\rho^2)}(t-\tau)^\gamma\right)\frac{\widehat{f}(\tau,\lambda)}{1+a(\lambda^{2}+\rho^2)}d\tau\\
    + \widehat{\phi}(\lambda)\mathbb{E}_{\gamma,1}\left(-\frac{\lambda^{2}+\rho^2+m}{1+a(\lambda^{2}+\rho^2)}t^\gamma\right),
\end{multline}
where $\mathbb{E}_{\gamma,1}(z)$ is the classical Mittag-Leffler function and $\mathbb{E}_{\gamma,\gamma}(z)$ is the Mittag-Leffler type function. Now by using the inverse Fourier-Jacobi transform $\mathcal{F}_{\alpha,\beta}^{-1}$ \eqref{Invers Jacobi Transform} to \eqref{JacobiSolution}, we obtain the formula for the solution of the problem \eqref{Heat equation}-\eqref{Initial date}, given by
\begin{equation*}
    \begin{split}
       u(t,x)&=\int_0^\infty\int_0^\infty\int_0^t (t-\tau)^{\gamma-1} \mathbb{E}_{\gamma,\gamma}\left(-\frac{\lambda^{2}+\rho^2+m}{1+a(\lambda^{2}+\rho^2)}(t-\tau)^\gamma\right)\frac{f(\tau,y)}{1+a(\lambda^{2}+\rho^2)}\\
       &\times\varphi_{\lambda}^{\alpha,\beta}(y)\varphi_{\lambda}^{\alpha,\beta}(x)d\tau d\mu_{\alpha,\beta}(y)d\nu_{\alpha,\beta}(\lambda)\\
       &+\int_0^\infty\int_0^\infty\mathbb{E}_{\gamma,1}\left(-\frac{\lambda^{2}+\rho^2+m}{1+a(\lambda^{2}+\rho^2)}t^\gamma\right)\phi(y)\varphi_{\lambda}^{\alpha,\beta}(y)\varphi_{\lambda}^{\alpha,\beta}(x)d\mu_{\alpha,\beta}(y)d\nu_{\alpha,\beta}(\lambda).
    \end{split}
\end{equation*}

By using the property
\begin{equation*}
    \frac{d}{d\tau}\left(\mathbb{E}_{\gamma,1}(c \tau^\gamma)\right)=c\tau^{\gamma-1}\mathbb{E}_{\gamma,\gamma}(c \tau^\gamma), \quad c =\text{constant},
\end{equation*}
of the Mittag-Leffler function, we obtain
\begin{multline*}
    \frac{d}{d\tau}\left(\mathbb{E}_{\gamma,1}\left(-\frac{\lambda^{2}+\rho^2+m}{1+a(\lambda^{2}+\rho^2)}(t-\tau)^\gamma\right)\right)\\
    =\frac{\lambda^{2}+\rho^2+m}{1+a(\lambda^{2}+\rho^2)} (t-\tau)^{\gamma-1}\mathbb{E}_{\gamma,\gamma}\left(-\frac{\lambda^{2}+\rho^2+m}{1+a(\lambda^{2}+\rho^2)}(t-\tau)^\gamma\right)
\end{multline*}
and we can write \eqref{JacobiSolution} in a form
\begin{align*}
    \widehat{u}(t,\lambda) &=
    \int_0^t(t-\tau)^{\gamma-1} \mathbb{E}_{\gamma,\gamma}\left(-\frac{\lambda^{2}+\rho^2+m}{1+a(\lambda^{2}+\rho^2)}(t-\tau)^\gamma\right)\frac{\widehat{f}(\tau,\lambda)}{1+a(\lambda^{2}+\rho^2)}d\tau\\
    &+ \widehat{\phi}(\lambda)\mathbb{E}_{\gamma,1}\left(-\frac{\lambda^{2}+\rho^2+m}{1+a(\lambda^{2}+\rho^2)}t^\gamma\right)\\
    &=\frac{1}{\lambda^{2}+\rho^2+m}\int_0^t\frac{d}{d\tau}\left(\mathbb{E}_{\gamma,1}\left(-\frac{\lambda^{2}+\rho^2+m}{1+a(\lambda^{2}+\rho^2)}(t-\tau)^\gamma\right)\right)\widehat{f}(\tau,\lambda)d\tau\\
    &+ \widehat{\phi}(\lambda)\mathbb{E}_{\gamma,1}\left(-\frac{\lambda^{2}+\rho^2+m}{1+a(\lambda^{2}+\rho^2)}t^\gamma\right)\\
    &=\frac{1}{\lambda^{2}+\rho^2+m}\widehat{f}(t,\lambda)-\frac{1}{\lambda^{2}+\rho^2+m}\widehat{f}(0,\lambda)\mathbb{E}_{\gamma,1}\left(-\frac{\lambda^{2}+\rho^2+m}{1+a(\lambda^{2}+\rho^2)}t^\gamma\right)\\
    &-\frac{1}{\lambda^{2}+\rho^2+m}\int_0^t\mathbb{E}_{\gamma,1}\left(-\frac{\lambda^{2}+\rho^2+m}{1+a(\lambda^{2}+\rho^2)}(t-\tau)^\gamma\right)\frac{d}{d\tau}\widehat{f}(\tau,\lambda)d\tau\\
    &+ \widehat{\phi}(\lambda)\mathbb{E}_{\gamma,1}\left(-\frac{\lambda^{2}+\rho^2+m}{1+a(\lambda^{2}+\rho^2)}t^\gamma\right)
\end{align*}
by using the rule integration by parts and $\mathbb{E}_{\gamma,1}(0)=1$. Let $0<\gamma<1$ and $f\in C^1([0,T],L^2(\mu))$, $\phi\in\mathcal{H}$, then we can estimate $u$ as follows
\begin{align*}
    \|u(t,\cdot)\|_{\mathcal{H}}^2&=\int_0^\infty\left|(\lambda^{2}+\rho^2)\widehat{u}(t,\lambda)\right|^2d\nu_{\alpha,\beta}(\lambda)
\end{align*}
\begin{align*}
    &\lesssim\int_0^\infty\left|(\lambda^{2}+\rho^2)\frac{\widehat{f}(t,\lambda)}{\lambda^{2}+\rho^2+m}\right|^2d\nu_{\alpha,\beta}(\lambda)\\
    &+\int_0^\infty\left|(\lambda^{2}+\rho^2)\frac{\widehat{f}(0,\lambda)}{\lambda^{2}+\rho^2+m}\mathbb{E}_{\gamma,1}\left(-\frac{\lambda^{2}+\rho^2+m}{1+a(\lambda^{2}+\rho^2)}t^\gamma\right)\right|^2d\nu_{\alpha,\beta}(\lambda)\\
    &+\int_0^\infty\left|\frac{\lambda^{2}+\rho^2}{\lambda^{2}+\rho^2+m}\int_0^t\mathbb{E}_{\gamma,1}\left(-\frac{\lambda^{2}+\rho^2+m}{1+a(\lambda^{2}+\rho^2)}(t-\tau)^\gamma\right)\frac{d}{d\tau}\widehat{f}(\tau,\lambda)d\tau\right|^2\\
    &\times d\nu_{\alpha,\beta}(\lambda) + \int_0^\infty\left|(\lambda^{2}+\rho^2)\widehat{\phi}(\lambda)\mathbb{E}_{\gamma,1}\left(-\frac{\lambda^{2}+\rho^2+m}{1+a(\lambda^{2}+\rho^2)}t^\gamma\right)\right|^2d\nu_{\alpha,\beta}(\lambda)\\
    &\lesssim\int_0^\infty\left|\widehat{f}(t,\lambda)\right|^2d\nu_{\alpha,\beta}(\lambda)+\int_0^\infty\left|\widehat{f}(0,\lambda)\right|^2d\nu_{\alpha,\beta}(\lambda)\\
    &+\int_0^\infty\left(\int_0^t\left|\frac{d}{d\tau}\widehat{f}(\tau,\lambda)\right|d\tau\right)^2d\nu_{\alpha,\beta}(\lambda)+ \int_0^\infty\left|(\lambda^{2}+\rho^2)\widehat{\phi}(\lambda)\right|^2d\nu_{\alpha,\beta}(\lambda)\\
    &\lesssim \|f(t,\cdot)\|_{2,\mu}^2 + \|f(0,\cdot)\|_{2,\mu}^2 + \int_0^T \|\frac{d}{dt}f(t,\cdot)\|_{2,\mu}^2 dt + \|\phi\|_{\mathcal{H}}^2,
\end{align*}
here we have used Cauchy-Schwarz inequality, Fubibi's theorem and $a\lesssim b$ denotes $a\leq cb$ for some positive constant $c$ independent of $a$ and $b$. Thus,
\begin{equation*}
    \|u(t,\cdot)\|_{\mathcal{H}}^2 \lesssim \|f(t,\cdot)\|_{2,\mu}^2 + \|f(0,\cdot)\|_{2,\mu}^2 + \int_0^T \|\frac{d}{dt}f(t,\cdot)\|_{2,\mu}^2 dt + \|\phi\|_{\mathcal{H}}^2.
\end{equation*}
Then, we obtain
\begin{equation*}
    \|u\|_{C([0,T],\mathcal{H})}^2\lesssim \|f\|_{C^1([0,T],L^2(\mu))}^2 + \|\phi\|_{\mathcal{H}}^2<\infty.
\end{equation*}

In a case $\gamma=1$, we have
\begin{equation*}
    \|u(t,\cdot)\|_{\mathcal{H}}^2=\int_0^\infty\left|(\lambda^{2}+\rho^2)\widehat{u}(t,\lambda)\right|^2d\nu_{\alpha,\beta}(\lambda)
\end{equation*}
\begin{multline*}
    =\int_0^\infty\Biggl|(\lambda^{2}+\rho^2)\int_0^t \frac{\widehat{f}(\tau,\lambda)}{1+a(\lambda^2+\rho^2)}e^{-\frac{\lambda^{2}+\rho^2+m}{1+a(\lambda^{2}+\rho^2)}(t-\tau)}d\tau\\
    +(\lambda^{2}+\rho^2)\widehat{\phi}(\lambda)e^{-\frac{\lambda^{2}+\rho^2+m}{1+a(\lambda^{2}+\rho^2)}t}\Biggr|^2d\nu_{\alpha,\beta}(\lambda)
\end{multline*}
\begin{multline*}
    \lesssim\int_0^\infty\left|\int_0^t \widehat{f}(\tau,\lambda)e^{-\frac{\lambda^{2}+\rho^2+m}{1+a(\lambda^{2}+\rho^2)}(t-\tau)}d\tau\right|^2d\nu_{\alpha,\beta}(\lambda)\\
    +\int_0^\infty\left|(\lambda^{2}+\rho^2)\widehat{\phi}(\lambda)e^{-\frac{\lambda^{2}+\rho^2+m}{1+a(\lambda^{2}+\rho^2)}t}\right|^2d\nu_{\alpha,\beta}(\lambda)
\end{multline*}
\begin{multline*}
    \lesssim\int_0^\infty\int_0^T\left|\widehat{f}(t,\lambda)\right|^2dt d\nu_{\alpha,\beta}(\lambda) + \int_0^\infty\left|(\lambda^{2}+\rho^2)\widehat{\phi}(\lambda)\right|^2d\nu_{\alpha,\beta}(\lambda)\\
    =\int_0^T \|f(t,\cdot)\|_{2,\mu}^2dt + \|\phi\|_{\mathcal{H}}^2
\end{multline*}
by using Cauchy-Schwarz inequality and Fubini's theorem. Thus,
\begin{equation*}
    \|u(t,\cdot)\|_{\mathcal{H}}^2\lesssim\int_0^T \|f(t,\cdot)\|_{2,\mu}^2dt + \|\phi\|_{\mathcal{H}}^2.
\end{equation*}
Then, we have 
\begin{equation*}
    \|u\|_{C([0,T],\mathcal{H})}^2 \lesssim\|f\|_{C([0,T],L^2(\mu))}^2 + \|\phi\|_{\mathcal{H}}^2<\infty.
\end{equation*}
Let us estimate the function $\mathbb{D}_{0^+,t}^\gamma u$
\begin{multline*}
    \|\mathbb{D}_{0^+,t}^\gamma u(t,\cdot)\|_{2,\mu}^2 = \|\mathbb{D}_{0^+,t}^\gamma \widehat{u}(t,\cdot)\|_{2,\nu}^2 = \int_0^\infty\left|\mathbb{D}_{0^+,t}^\gamma \widehat{u}(t,\cdot)\right|^2d\nu_{\alpha,\beta}(\lambda)\\ =\int_0^\infty\left|\frac{\widehat{f}(t,\lambda)}{1+a(\lambda^{2}+\rho^2)}-\frac{\lambda^{2}+\rho^2+m}{1+a(\lambda^{2}+\rho^2)}\widehat{u}(t,\lambda)\right|^2d\nu_{\alpha,\beta}(\lambda)\\
    \lesssim \|\widehat{f}(t,\cdot)\|_{2,\nu}^2 + \|\widehat{u}(t,\cdot)\|_{2,\nu}^2.
\end{multline*}
Thus, we have
\begin{equation*}
    \|\mathbb{D}_{0^+,t}^\gamma u(t,\cdot)\|_{2,\mu}^2 \lesssim \|f(t,\cdot)\|_{2,\mu}^2 + \|u(t,\cdot)\|_{2,\mu}^2
\end{equation*}
and
\begin{equation*}
    \|\mathbb{D}_{0^+,t}^\gamma u\|_{C([0,T],L^2(\mu))}^2 \lesssim \|f\|_{C([0,T],L^2(\mu))}^2 +\|u\|_{C([0,T],L^2(\mu))}^2<\infty.
\end{equation*}
Consequently, using Definition \ref{DefinitionSpace} we obtain $u\in C^\gamma([0,T],L^2(\mu))$. Our prove is completed.
\end{proof}

\subsection{The ISP for the time-fractional pseudo-parabolic equation} This subsection deals with the ISP for the time-fractional pseudo-parabolic equation associated with the Jacobi operator $\Delta_{\alpha,\beta}$ \eqref{EQ: 1.1}.

\subsubsection{Statement of the problem} Let $0<\gamma\leq1$. We  aim  to  find  a  couple  of  functions  $(u,f)$  satisfying equation 
\begin{equation}\label{InvHeat equation}
\mathbb{D}_{0^+,t}^\gamma\left( u(t,x)-a\Delta_{\alpha,\beta}u(t,x)\right)-\Delta_{\alpha,\beta}u(t,x)+mu(t,x)=f(x),\quad (t,x)\in D,
\end{equation}
under conditions
\begin{equation}\label{Date1}
u(0,x)=\phi(x),\quad x\in\mathbb{R}^+,
\end{equation}
and
\begin{equation}\label{Date2}
u(T,x)=\psi(x),\quad x\in\mathbb{R}^+.
\end{equation}

\begin{theorem}\label{Theorem2} Let $0<\gamma\leq1$. Assume that $\psi,\phi\in\mathcal{H}$. Then the pair $(u,f)$ is a unique solution of the ISP \eqref{InvHeat equation}-\eqref{Date2}, which are functions $u\in C^\gamma([0,T],L^{2}(\mu))\cap C([0,T],\mathcal{H}), f\in L^{2}(\mu)$ can be represented by the formulas
\begin{multline*}
    u(t,x)=\int_0^\infty\int_0^\infty\frac{1-\mathbb{E}_{\gamma,1}\left(-\frac{\lambda^{2}+\rho^2+m}{1+a(\lambda^{2}+\rho^2)}t^\gamma\right)}{1-\mathbb{E}_{\gamma,1}\left(-\frac{\lambda^{2}+\rho^2+m}{1+a(\lambda^{2}+\rho^2)}T^\gamma\right)}\psi(y)\varphi_{\lambda}^{\alpha,\beta}(y)\varphi_{\lambda}^{\alpha,\beta}(x)d\mu_{\alpha,\beta}(y)d\nu_{\alpha,\beta}(\lambda)\\
    -\int_0^\infty\int_0^\infty\frac{\mathbb{E}_{\gamma,1}\left(-\frac{\lambda^{2}+\rho^2+m}{1+a(\lambda^{2}+\rho^2)}T^\gamma\right)-\mathbb{E}_{\gamma,1}\left(-\frac{\lambda^{2}+\rho^2+m}{1+a(\lambda^{2}+\rho^2)}t^\gamma\right)}{1-\mathbb{E}_{\gamma,1}\left(-\frac{\lambda^{2}+\rho^2+m}{1+a(\lambda^{2}+\rho^2)}T^\gamma\right)}\\
    \times\phi(y)\varphi_{\lambda}^{\alpha,\beta}(y)\varphi_{\lambda}^{\alpha,\beta}(x)d\mu_{\alpha,\beta}(y)d\nu_{\alpha,\beta}(\lambda)
\end{multline*}
and
\begin{multline*}
    f(x)=\int_0^\infty\int_0^\infty(\lambda^{2}+\rho^2+m)\frac{\psi(y)-\phi(y)\mathbb{E}_{\gamma,1}\left(-\frac{\lambda^{2}+\rho^2+m}{1+a(\lambda^{2}+\rho^2)}T^\gamma\right)}{1-\mathbb{E}_{\gamma,1}\left(-\frac{\lambda^{2}+\rho^2+m}{1+a(\lambda^{2}+\rho^2)}T^\gamma\right)}\\
    \times\varphi_{\lambda}^{\alpha,\beta}(y)\varphi_{\lambda}^{\alpha,\beta}(x)d\mu_{\alpha,\beta}(y)d\nu_{\alpha,\beta}(\lambda).
\end{multline*}
\end{theorem}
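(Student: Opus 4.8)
The plan is to follow the scheme of the proof of Theorem~\ref{Theorem1}: apply the Fourier--Jacobi transform $\mathcal{F}_{\alpha,\beta}$, solve the resulting scalar fractional Cauchy problem, use the over-determination condition \eqref{Date2} to eliminate the unknown coefficient $\widehat f(\lambda)$, and finally transfer all bounds back to physical space by Plancherel's identity \eqref{Plancherel's identity} and Lemma~\ref{lemma}. Throughout write $b(\lambda)=\frac{\lambda^{2}+\rho^{2}+m}{1+a(\lambda^{2}+\rho^{2})}$; then $b(\lambda)>0$ for $\nu_{\alpha,\beta}$-a.e.\ $\lambda$ (the only possible zero is the null point $\lambda=0$), so $0<\mathbb{E}_{\gamma,1}(-b(\lambda)t^{\gamma})<1$ by \eqref{InEq} when $0<\gamma<1$ and by $0<e^{-b(\lambda)t}<1$ when $\gamma=1$, and in particular $1-\mathbb{E}_{\gamma,1}(-b(\lambda)T^{\gamma})>0$ a.e.

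\emph{Reduction and representation.} Applying $\mathcal{F}_{\alpha,\beta}$ to \eqref{InvHeat equation}--\eqref{Date1} and using Proposition~\ref{Norm-preserving map} gives \eqref{FJEquation}--\eqref{FJCondition} with $\widehat f$ independent of $t$. Since the integrand in the Duhamel term of \eqref{JacobiSolution} no longer depends on $\tau$, it equals $\frac{\widehat f(\lambda)}{1+a(\lambda^{2}+\rho^{2})}\int_0^t s^{\gamma-1}\mathbb{E}_{\gamma,\gamma}(-b s^{\gamma})\,ds$, and $\frac{d}{ds}\mathbb{E}_{\gamma,1}(-b s^{\gamma})=-b\,s^{\gamma-1}\mathbb{E}_{\gamma,\gamma}(-b s^{\gamma})$ together with $\mathbb{E}_{\gamma,1}(0)=1$ gives $\int_0^t s^{\gamma-1}\mathbb{E}_{\gamma,\gamma}(-b s^{\gamma})\,ds=\frac1b\bigl(1-\mathbb{E}_{\gamma,1}(-b t^{\gamma})\bigr)$; using $\frac{1}{1+a(\lambda^{2}+\rho^{2})}\cdot\frac1b=\frac{1}{\lambda^{2}+\rho^{2}+m}$ the unique solution of the Cauchy problem becomes
\[
\widehat u(t,\lambda)=\frac{\widehat f(\lambda)}{\lambda^{2}+\rho^{2}+m}\Bigl(1-\mathbb{E}_{\gamma,1}(-b(\lambda)t^{\gamma})\Bigr)+\widehat\phi(\lambda)\,\mathbb{E}_{\gamma,1}(-b(\lambda)t^{\gamma}).
\]
Imposing $\widehat u(T,\lambda)=\widehat\psi(\lambda)$ and dividing by the (a.e.\ positive) quantity $1-\mathbb{E}_{\gamma,1}(-b(\lambda)T^{\gamma})$ yields
\[
\widehat f(\lambda)=(\lambda^{2}+\rho^{2}+m)\,\frac{\widehat\psi(\lambda)-\widehat\phi(\lambda)\,\mathbb{E}_{\gamma,1}(-b(\lambda)T^{\gamma})}{1-\mathbb{E}_{\gamma,1}(-b(\lambda)T^{\gamma})},
\]
and substituting this back and regrouping terms gives $\widehat u(t,\lambda)=\frac{1-\mathbb{E}_{\gamma,1}(-b(\lambda)t^{\gamma})}{1-\mathbb{E}_{\gamma,1}(-b(\lambda)T^{\gamma})}\widehat\psi(\lambda)-\frac{\mathbb{E}_{\gamma,1}(-b(\lambda)T^{\gamma})-\mathbb{E}_{\gamma,1}(-b(\lambda)t^{\gamma})}{1-\mathbb{E}_{\gamma,1}(-b(\lambda)T^{\gamma})}\widehat\phi(\lambda)$. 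Applying $\mathcal{F}_{\alpha,\beta}^{-1}$ \eqref{Invers Jacobi Transform}, together with $\widehat\psi(\lambda)=\int_0^\infty\psi(y)\varphi_\lambda^{\alpha,\beta}(y)\,d\mu_{\alpha,\beta}(y)$ and the analogous formula for $\widehat\phi$, produces exactly the two announced formulas; and $\mathbb{E}_{\gamma,1}(0)=1$ gives $\widehat u(0,\lambda)=\widehat\phi(\lambda)$, $\widehat u(T,\lambda)=\widehat\psi(\lambda)$, so \eqref{Date1}--\eqref{Date2} hold and reversing the transform steps shows $(u,f)$ solves \eqref{InvHeat equation}.

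\emph{Uniqueness and function spaces.} Uniqueness follows the same route: for the difference $(u_0,f_0)$ of two solution pairs one has, from the computation above with $\widehat\phi\equiv0$, $\widehat{u_0}(t,\lambda)=\frac{\widehat{f_0}(\lambda)}{\lambda^{2}+\rho^{2}+m}\bigl(1-\mathbb{E}_{\gamma,1}(-b(\lambda)t^{\gamma})\bigr)$, and evaluating at $t=T$ with $1-\mathbb{E}_{\gamma,1}(-b(\lambda)T^{\gamma})>0$ a.e.\ forces $\widehat{f_0}=0$, hence $f_0=0$ and $u_0=0$. For the regularity claims: by Lemma~\ref{lemma} the two multipliers in $\widehat u(t,\lambda)$ are bounded in modulus by $1$, uniformly in $t\in[0,T]$ and $\lambda$, so Plancherel gives $\|u(t,\cdot)\|_{\mathcal H}^{2}\le 2(\|\psi\|_{\mathcal H}^{2}+\|\phi\|_{\mathcal H}^{2})$, and continuity of $t\mapsto u(t,\cdot)$ into $\mathcal H$ follows from continuity of $t\mapsto\mathbb{E}_{\gamma,1}(-b(\lambda)t^{\gamma})$ and dominated convergence, giving $u\in C([0,T],\mathcal H)$. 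For $f$: by Plancherel $\|f\|_{2,\mu}=\|\widehat f\|_{2,\nu}$, and using the lower bound $1-\mathbb{E}_{\gamma,1}(-bT^{\gamma})\ge\frac{\Gamma(1+\gamma)^{-1}bT^{\gamma}}{1+\Gamma(1+\gamma)^{-1}bT^{\gamma}}$ (the Simon estimate recalled before Lemma~\ref{lemma} for $0<\gamma<1$, and $1-e^{-bT^{\gamma}}$ for $\gamma=1$) together with the fact that $b(\lambda)$ stays bounded when $a>0$ and equals $\lambda^{2}+\rho^{2}+m$ when $a=0$, one obtains $\frac{\lambda^{2}+\rho^{2}+m}{1-\mathbb{E}_{\gamma,1}(-b(\lambda)T^{\gamma})}\lesssim 1+\lambda^{2}+\rho^{2}$, whence $|\widehat f(\lambda)|\lesssim(1+\lambda^{2}+\rho^{2})(|\widehat\psi(\lambda)|+|\widehat\phi(\lambda)|)$ and $\|f\|_{2,\mu}^{2}\lesssim\|\psi\|_{\mathcal H}^{2}+\|\phi\|_{\mathcal H}^{2}+\|\psi\|_{2,\mu}^{2}+\|\phi\|_{2,\mu}^{2}<\infty$ (recall $\mathcal H\subset L^{2}(\mu)$). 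Finally \eqref{FJEquation} gives $\mathbb{D}_{0^{+},t}^{\gamma}\widehat u(t,\lambda)=\frac{\widehat f(\lambda)}{1+a(\lambda^{2}+\rho^{2})}-b(\lambda)\widehat u(t,\lambda)$; since $\frac{1}{1+a(\lambda^{2}+\rho^{2})}\le1$ and $b(\lambda)\lesssim 1+\lambda^{2}+\rho^{2}$, both terms are bounded in $L^{2}(\nu)$ uniformly in $t$ by a multiple of $\|f\|_{2,\mu}^{2}+\|\psi\|_{\mathcal H}^{2}+\|\phi\|_{\mathcal H}^{2}$, so $\mathbb{D}_{0^{+},t}^{\gamma}u\in C([0,T],L^{2}(\mu))$ and Definition~\ref{DefinitionSpace} yields $u\in C^{\gamma}([0,T],L^{2}(\mu))$.

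\emph{Main obstacle.} The only step that is not purely mechanical is the bound $f\in L^{2}(\mu)$: one has to show that the "inverse-problem multiplier" $\frac{\lambda^{2}+\rho^{2}+m}{1-\mathbb{E}_{\gamma,1}(-b(\lambda)T^{\gamma})}$ grows at most quadratically in $\lambda$. This rests on a quantitative lower bound for $1-\mathbb{E}_{\gamma,1}(-s)$ near $s=0$, so that the vanishing of the denominator as $b(\lambda)\to0$ (which can happen near $\lambda=0$ only in the degenerate case $\rho=m=0$) is compensated by the factor $\lambda^{2}+\rho^{2}+m$ in the numerator; and, since the cited Mittag-Leffler estimate is stated only for $0<\gamma<1$, the endpoint $\gamma=1$ must be treated separately using the explicit exponential.
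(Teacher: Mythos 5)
Your proposal is correct and follows essentially the same route as the paper's proof: apply the Fourier--Jacobi transform, solve the scalar fractional ODE, eliminate $\widehat f$ via the over-determination condition at $t=T$, and transfer the estimates back by Plancherel and Lemma \ref{lemma}. The only substantive difference is that you explicitly justify the bound $\|f\|_{2,\mu}^2\lesssim\|\psi\|_{\mathcal H}^2+\|\phi\|_{\mathcal H}^2+\|\psi\|_{2,\mu}^2+\|\phi\|_{2,\mu}^2$ via a quantitative lower bound on $1-\mathbb{E}_{\gamma,1}\left(-\frac{\lambda^{2}+\rho^2+m}{1+a(\lambda^{2}+\rho^2)}T^\gamma\right)$ (covering the degenerate case $\rho=m=0$ and the endpoint $\gamma=1$ separately), and you spell out the uniqueness argument by forcing $\widehat{f_0}=0$ at $t=T$ --- two points the paper's proof passes over with a bare $\lesssim$ and a one-line appeal to Proposition \ref{Norm-preserving map}.
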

\begin{proof}
We assume that $0<\gamma\leq1$, and $u(t,\cdot),f\in\mathcal{H}$. Let us first prove the existence result. By using the Fourier-Jacobi transform
$\mathcal{F}_{\alpha,\beta}$ \eqref{Jacobi Transform} on both sides of \eqref{InvHeat equation}-\eqref{Date2}, we obtain
\begin{equation}
\label{FJInvHeat equation}
\mathbb{D}_{0^+,t}^\gamma\widehat{u}(t,\lambda)+\frac{\lambda^{2}+\rho^2+m}{1+a(\lambda^{2}+\rho^2)}\widehat{u}(t,\lambda)=\frac{\widehat{f}(\lambda)}{1+a(\lambda^{2}+\rho^2)},\quad (t,\lambda)\in D,
\end{equation}
\begin{equation}
\label{FJDate1}
\widehat{u}(0,\lambda)=\widehat{\phi}(\lambda),\quad \lambda\in\mathbb{R}^+,
\end{equation}
\begin{equation}
\label{FJDate2}
\widehat{u}(T,\lambda)=\widehat{\psi}(\lambda),\quad \lambda\in\mathbb{R}^+.
\end{equation}
Solution of the equation \eqref{FJInvHeat equation} is given by 
\begin{equation} \label{Solution11}
    \widehat{u}(t,\lambda)=\frac{\widehat{f}(\lambda)}{\lambda^{2}+\rho^2+m} + C(\lambda)\mathbb{E}_{\gamma,1}\left(-\frac{\lambda^{2}+\rho^2+m}{1+a(\lambda^{2}+\rho^2)}t^\gamma\right),
\end{equation}
for all $0<\gamma\leq1$ and functions $\widehat{f}(\lambda)$ and $C(\lambda)$ are unknown functions. For determine these functions we use conditions \eqref{FJDate1} and \eqref{FJDate2}. After that we have
\begin{equation*}
    \widehat{u}(0,\lambda)=\frac{\widehat{f}(\lambda)}{\lambda^{2}+\rho^2+m} + C(\lambda)=\widehat{\phi}(\lambda)
\end{equation*}
and
\begin{equation*}
    \widehat{u}(T,\lambda)=\frac{\widehat{f}(\lambda)}{\lambda^{2}+\rho^2+m} + C(\lambda)\mathbb{E}_{\gamma,1}\left(-\frac{\lambda^{2}+\rho^2+m}{1+a(\lambda^{2}+\rho^2)}T^\gamma\right)=\widehat{\psi}(\lambda).
\end{equation*}
Thus we have
\begin{equation*}
    C(\lambda) = \frac{\widehat{\phi}(\lambda)-\widehat{\psi}(\lambda)}{1-\mathbb{E}_{\gamma,1}\left(-\frac{\lambda^{2}+\rho^2+m}{1+a(\lambda^{2}+\rho^2)}T^\gamma\right)}
\end{equation*}
and
\begin{equation} \label{f solution}
    \widehat{f}(\lambda)=(\lambda^{2}+\rho^2+m)\frac{\widehat{\psi}(\lambda)-\widehat{\phi}(\lambda)\mathbb{E}_{\gamma,1}\left(-\frac{\lambda^{2}+\rho^2+m}{1+a(\lambda^{2}+\rho^2)}T^\gamma\right)}{1-\mathbb{E}_{\gamma,1}\left(-\frac{\lambda^{2}+\rho^2+m}{1+a(\lambda^{2}+\rho^2)}T^\gamma\right)}.
\end{equation}
Substituting the resulting functions $C(\lambda)$ and $\widehat{f}(\lambda)$ into \eqref{Solution11}, we get
\begin{multline*}
    \widehat{u}(t,\lambda)=\frac{1-\mathbb{E}_{\gamma,1}\left(-\frac{\lambda^{2}+\rho^2+m}{1+a(\lambda^{2}+\rho^2)}t^\gamma\right)}{1-\mathbb{E}_{\gamma,1}\left(-\frac{\lambda^{2}+\rho^2+m}{1+a(\lambda^{2}+\rho^2)}T^\gamma\right)}\widehat{\psi}(\lambda)\\
    -\frac{\mathbb{E}_{\gamma,1}\left(-\frac{\lambda^{2}+\rho^2+m}{1+a(\lambda^{2}+\rho^2)}T^\gamma\right)-\mathbb{E}_{\gamma,1}\left(-\frac{\lambda^{2}+\rho^2+m}{1+a(\lambda^{2}+\rho^2)}t^\gamma\right)}{1-\mathbb{E}_{\gamma,1}\left(-\frac{\lambda^{2}+\rho^2+m}{1+a(\lambda^{2}+\rho^2)}T^\gamma\right)}\widehat{\phi}(\lambda).
\end{multline*}
Therefore solution of the problem \eqref{FJInvHeat equation} - \eqref{FJDate2} is the pair $(\widehat{u},\widehat{f})$. We obtain solution of the problem \eqref{InvHeat equation}-\eqref{Date2} by applying the inverse Fourier-Jacobi transform $\mathcal{F}_{\alpha,\beta}^{-1}$ \eqref{Invers Jacobi Transform} to the functions $\widehat{u}$ and $\widehat{f}$, i.e.  
\begin{multline}\label{Sol1}
    u(t,x)=\int_0^\infty\int_0^\infty\frac{1-\mathbb{E}_{\gamma,1}\left(-\frac{\lambda^{2}+\rho^2+m}{1+a(\lambda^{2}+\rho^2)}t^\gamma\right)}{1-\mathbb{E}_{\gamma,1}\left(-\frac{\lambda^{2}+\rho^2+m}{1+a(\lambda^{2}+\rho^2)}T^\gamma\right)}\psi(y)\varphi_{\lambda}^{\alpha,\beta}(y)\varphi_{\lambda}^{\alpha,\beta}(x)d\mu_{\alpha,\beta}(y)d\nu_{\alpha,\beta}(\lambda)\\
    -\int_0^\infty\int_0^\infty\frac{\mathbb{E}_{\gamma,1}\left(-\frac{\lambda^{2}+\rho^2+m}{1+a(\lambda^{2}+\rho^2)}T^\gamma\right)-\mathbb{E}_{\gamma,1}\left(-\frac{\lambda^{2}+\rho^2+m}{1+a(\lambda^{2}+\rho^2)}t^\gamma\right)}{1-\mathbb{E}_{\gamma,1}\left(-\frac{\lambda^{2}+\rho^2+m}{1+a(\lambda^{2}+\rho^2)}T^\gamma\right)}\\
    \times\phi(y)\varphi_{\lambda}^{\alpha,\beta}(y)\varphi_{\lambda}^{\alpha,\beta}(x)d\mu_{\alpha,\beta}(y)d\nu_{\alpha,\beta}(\lambda)
\end{multline}
and
\begin{multline}\label{Sol2}
    f(x)=\int_0^\infty\int_0^\infty(\lambda^{2}+\rho^2+m)\frac{\psi(y)-\phi(y)\mathbb{E}_{\gamma,1}\left(-\frac{\lambda^{2}+\rho^2+m}{1+a(\lambda^{2}+\rho^2)}T^\gamma\right)}{1-\mathbb{E}_{\gamma,1}\left(-\frac{\lambda^{2}+\rho^2+m}{1+a(\lambda^{2}+\rho^2)}T^\gamma\right)}\\
    \times\varphi_{\lambda}^{\alpha,\beta}(y)\varphi_{\lambda}^{\alpha,\beta}(x)d\mu_{\alpha,\beta}(y)d\nu_{\alpha,\beta}(\lambda),
\end{multline}
for all $0<\gamma\leq1$. 

Let $\psi,\phi\in\mathcal{H}$. Then using Lemma \ref{lemma} we can estimate the function $u$ as following 
\begin{equation*}
     \|u(t,\cdot)\|_{\mathcal{H}}^2 = \int_0^\infty|(\lambda^2+\rho^2)\widehat{u}(t,\lambda)|^2d\nu_{\alpha,\beta}(\lambda)
\end{equation*}
\begin{align*}
    &\lesssim \int_0^\infty\left|(\lambda^2+\rho^2)\widehat{\psi}(\lambda)\frac{1-\mathbb{E}_{\gamma,1}\left(-\frac{\lambda^{2}+\rho^2+m}{1+a(\lambda^{2}+\rho^2)}t^\gamma\right)}{1-\mathbb{E}_{\gamma,1}\left(-\frac{\lambda^{2}+\rho^2+m}{1+a(\lambda^{2}+\rho^2)}T^\gamma\right)}\right|^2d\nu_{\alpha,\beta}(\lambda)\\
    &+ \int_0^\infty\left|(\lambda^2+\rho^2)\widehat{\phi}(\lambda)\frac{\mathbb{E}_{\gamma,1}\left(-\frac{\lambda^{2}+\rho^2+m}{1+a(\lambda^{2}+\rho^2)}T^\gamma\right)-\mathbb{E}_{\gamma,1}\left(-\frac{\lambda^{2}+\rho^2+m}{1+a(\lambda^{2}+\rho^2)}t^\gamma\right)}{1-\mathbb{E}_{\gamma,1}\left(-\frac{\lambda^{2}+\rho^2+m}{1+a(\lambda^{2}+\rho^2)}T^\gamma\right)}\right|^2d\nu_{\alpha,\beta}(\lambda)\\ 
    &\lesssim \int_0^\infty|(\lambda^2+\rho^2)\widehat{\psi}(\lambda)|^2d\nu_{\alpha,\beta}(\lambda) + \int_0^\infty|(\lambda^2+\rho^2)\widehat{\phi}(\lambda)|^2d\nu_{\alpha,\beta}(\lambda).
\end{align*}
Thus,
\begin{equation*}
    \|u(t,\cdot)\|_{\mathcal{H}}^2\lesssim\|\psi\|_{\mathcal{H}}^2 + \|\phi\|_{\mathcal{H}}^2<\infty.
\end{equation*}
Then we have
\begin{equation*}
       \|u\|_{C([0,T],\mathcal{H})} \lesssim \|\psi\|_{\mathcal{H}} + \|\phi\|_{\mathcal{H}}<\infty.
\end{equation*}
Let us estimate the function $f$
\begin{align*}
\|f\|_{2,\mu}^2&=\|\widehat{f}\|_{2,\nu}^2=\int_0^\infty |\widehat{f}(\lambda)|^2d\nu_{\alpha,\beta}(\lambda)\\
&=\int_0^\infty\left|(\lambda^{2}+\rho^2+m)\frac{\widehat{\psi}(\lambda)-\widehat{\phi}(\lambda)\mathbb{E}_{\gamma,1}\left(-\frac{\lambda^{2}+\rho^2+m}{1+a(\lambda^{2}+\rho^2)}T^\gamma\right)}{1-\mathbb{E}_{\gamma,1}\left(-\frac{\lambda^{2}+\rho^2+m}{1+a(\lambda^{2}+\rho^2)}T^\gamma\right)}\right|^2d\nu_{\alpha,\beta}(\lambda)\\
&\lesssim\int_0^\infty\left|(\lambda^2+\rho^2+m)\frac{\widehat{\psi}(\lambda)}{1-\mathbb{E}_{\gamma,1}\left(-\frac{\lambda^{2}+\rho^2+m}{1+a(\lambda^{2}+\rho^2)}T^\gamma\right)}\right|^2d\nu_{\alpha,\beta}(\lambda)\\
&+\int_0^\infty\left|(\lambda^2+\rho^2+m)\frac{\widehat{\phi}(\lambda)\mathbb{E}_{\gamma,1}\left(-\frac{\lambda^{2}+\rho^2+m}{1+a(\lambda^{2}+\rho^2)}T^\gamma\right)}{1-\mathbb{E}_{\gamma,1}\left(-\frac{\lambda^{2}+\rho^2+m}{1+a(\lambda^{2}+\rho^2)}T^\gamma\right)}\right|^2d\nu_{\alpha,\beta}(\lambda)\\
&\lesssim\|\psi\|_{\mathcal{H}}^2 + \|\phi\|_{\mathcal{H}}^2.
\end{align*}
So, we obtain
\begin{equation*}
    \|f\|_{2,\mu}^2 \lesssim \|\psi\|_{\mathcal{H}}^2 + \|\phi\|_{\mathcal{H}}^2<\infty.
\end{equation*}
Next, we estimate the function $\mathbb{D}_{0^+,t}^\gamma u$
\begin{multline*}
\|\mathbb{D}_{0^+,t}^\gamma u(t,\cdot)\|_{2,\mu}^2 = \|\mathbb{D}_{0^+,t}^\gamma\widehat{u}(t,\cdot)\|_{2,\nu}^2 = \int_0^\infty|\mathbb{D}_{0^+,t}^\gamma\widehat{u}(t,\lambda)|^2d\nu_{\alpha,\beta}(\lambda)\\
= \int_0^\infty\left|\frac{\widehat{f}(\lambda)}{1+a(\lambda^{2}+\rho^2)} - \frac{\lambda^{2}+\rho^2+m}{1+a(\lambda^{2}+\rho^2)}\widehat{u}(t,\lambda)\right|^2d\nu_{\alpha,\beta}(\lambda)\\
\lesssim \|f\|_{2,\nu}^2 + \|u(t,\cdot)\|_{2,\nu}^2.
\end{multline*}
Finally, we have
\begin{equation*}
    \|\mathbb{D}_{0^+,t}^\gamma u\|_{C([0,T],L^2(\mu))}^2 \lesssim \|f\|_{2,\mu}^2 + \|u\|_{C([0,T],L^2(\mu))}^2
    < \infty.
\end{equation*}
It is obvious that $\|u\|_{C([0,T],L^2(\mu))}^2< \infty$. The existence is proved.

Now, let us prove the uniqueness of the solution. Taking into account the property of the Fourier-Jacobi transform Proposition \ref{Norm-preserving map}, one observes that a pair of functions $(u,f)$ is uniquely determined by the formulas \eqref{Sol1} and \eqref{Sol2}. The uniqueness is proved.
\end{proof}

\subsubsection{Stability Theorem}
Finally, we study a stability property of the solution $(u,f)$ of the problem \eqref{InvHeat equation}-\eqref{Date2} given by the formulas \eqref{Sol1} and \eqref{Sol2}, .

\begin{theorem} 
\label{Stability} 
Let $(u,f)$ and $(u_d,f_d)$ be solutions of the problem \eqref{InvHeat equation}-\eqref{Date2} corresponding to the
data $(\phi,\psi)$ and its small perturbation $(\phi_d,\psi_d)$, respectively. Then the solution of the problem \eqref{InvHeat equation}-\eqref{Date2} depends continuously on these data, namely, we have
\begin{equation*}
    \|u-u_d\|_{C([0,T],\mathcal{H})}^2 \lesssim \|\psi-\psi_d\|_{\mathcal{H}}^2 + \|\phi-\phi_d\|_{\mathcal{H}}^2
\end{equation*}
and
\begin{equation*}
    \|f-f_d\|_{2,\mu}^2 \lesssim \|\psi-\psi_d\|_{\mathcal{H}}^2 + \|\phi-\phi_d\|_{\mathcal{H}}^2.
\end{equation*}
\end{theorem}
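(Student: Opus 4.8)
The plan is to exploit the linearity of the inverse source problem together with the explicit representation formulas \eqref{Sol1} and \eqref{Sol2}. Since the problem \eqref{InvHeat equation}--\eqref{Date2} is linear in the data, the difference $(u-u_d, f-f_d)$ solves the same inverse source problem with data $(\phi-\phi_d, \psi-\psi_d)$, so by Theorem \ref{Theorem2} it is again given by the formulas \eqref{Sol1} and \eqref{Sol2} with $\psi$ replaced by $\psi-\psi_d$ and $\phi$ replaced by $\phi-\phi_d$. Thus the stability estimates follow by repeating \emph{verbatim} the a priori estimates already carried out in the proof of Theorem \ref{Theorem2}.

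Concretely, I would first write out $\widehat{u}(t,\lambda)-\widehat{u_d}(t,\lambda)$ using the representation obtained by substituting $C(\lambda)$ and $\widehat f(\lambda)$ into \eqref{Solution11}, giving
\begin{equation*}
\widehat{u}(t,\lambda)-\widehat{u_d}(t,\lambda)=\frac{1-\mathbb{E}_{\gamma,1}\!\left(-\tfrac{\lambda^{2}+\rho^2+m}{1+a(\lambda^{2}+\rho^2)}t^\gamma\right)}{1-\mathbb{E}_{\gamma,1}\!\left(-\tfrac{\lambda^{2}+\rho^2+m}{1+a(\lambda^{2}+\rho^2)}T^\gamma\right)}\bigl(\widehat{\psi}(\lambda)-\widehat{\psi_d}(\lambda)\bigr)-\frac{\mathbb{E}_{\gamma,1}\!\left(-\tfrac{\lambda^{2}+\rho^2+m}{1+a(\lambda^{2}+\rho^2)}T^\gamma\right)-\mathbb{E}_{\gamma,1}\!\left(-\tfrac{\lambda^{2}+\rho^2+m}{1+a(\lambda^{2}+\rho^2)}t^\gamma\right)}{1-\mathbb{E}_{\gamma,1}\!\left(-\tfrac{\lambda^{2}+\rho^2+m}{1+a(\lambda^{2}+\rho^2)}T^\gamma\right)}\bigl(\widehat{\phi}(\lambda)-\widehat{\phi_d}(\lambda)\bigr).
\end{equation*}
Multiplying by $(\lambda^2+\rho^2)$, squaring, integrating against $d\nu_{\alpha,\beta}$, and invoking Lemma \ref{lemma} (which bounds both Mittag-Leffler quotients by $1$ in absolute value, uniformly in $\lambda\in\mathbb{R}^+$ and $t\in(0,T)$) yields $\|u(t,\cdot)-u_d(t,\cdot)\|_{\mathcal H}^2\lesssim\|\psi-\psi_d\|_{\mathcal H}^2+\|\phi-\phi_d\|_{\mathcal H}^2$; taking the supremum over $t\in[0,T]$ gives the first claimed inequality. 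For the second, I would use \eqref{f solution} to write $\widehat f(\lambda)-\widehat{f_d}(\lambda)=(\lambda^2+\rho^2+m)\bigl[\widehat\psi(\lambda)-\widehat{\psi_d}(\lambda)-(\widehat\phi(\lambda)-\widehat{\phi_d}(\lambda))\mathbb{E}_{\gamma,1}(\cdots)\bigr]/\bigl(1-\mathbb{E}_{\gamma,1}(\cdots)\bigr)$, then apply the Plancherel identity \eqref{Plancherel's identity}, the bound \eqref{InEq3} for $1/(1-\mathbb{E}_{\gamma,1}(-\lambda T^\gamma))$, the bound \eqref{InEq} for $\mathbb{E}_{\gamma,1}$, and the elementary estimate $\lambda^2+\rho^2+m\lesssim\lambda^2+\rho^2$ (valid since $\rho^2\geq1$ and one may absorb $m$), to conclude $\|f-f_d\|_{2,\mu}^2\lesssim\|\psi-\psi_d\|_{\mathcal H}^2+\|\phi-\phi_d\|_{\mathcal H}^2$.

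There is essentially no serious obstacle here: the heavy lifting was done in Lemma \ref{lemma} and in the estimates inside the proof of Theorem \ref{Theorem2}, and linearity reduces stability to those estimates. The only point requiring a word of care is the uniformity of the Mittag-Leffler quotient bounds: one must note that Lemma \ref{lemma} applies with $\lambda$ there replaced by $\tfrac{\lambda^{2}+\rho^2+m}{1+a(\lambda^{2}+\rho^2)}$, which is a positive real number for every $\lambda\in\mathbb{R}^+$, so the hypotheses of the lemma are met and the bounds hold uniformly in $\lambda$ and in $t\in(0,T)$. With that observation in hand the proof is a direct transcription of the earlier a priori bounds, and I would present it as such rather than redoing every line.
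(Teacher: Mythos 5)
Your proposal follows essentially the same route as the paper's proof: pass to the Fourier--Jacobi side by linearity, insert the explicit representations \eqref{Sol1}--\eqref{Sol2} (equivalently \eqref{f solution}) for the differences, bound the Mittag--Leffler quotients via Lemma \ref{lemma}, and conclude with the Plancherel identity \eqref{Plancherel's identity}. One minor caveat: your justification ``$\rho^2\geq1$'' for absorbing $m$ into $\lambda^2+\rho^2$ is not correct in general (for $\alpha=\beta=-\tfrac{1}{2}$ one has $\rho=0$), but the paper performs the same absorption without comment, so this does not set your argument apart from theirs.
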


\begin{proof}
From the definition of the Fourier-Jacobi transform \eqref{Jacobi Transform}
\begin{equation*}
\mathcal{F}_{\alpha,\beta}(u(t,\cdot))(\lambda)=\widehat{u}(t,\lambda)=\int_0^\infty u(t,x)\varphi_{\lambda}^{\alpha,\beta}(x) d\mu_{\alpha,\beta}(x),
\end{equation*}
we conclude that
\begin{align*}
    \mathcal{F}_{\alpha,\beta}(u(t,\cdot)-u_d(t,\cdot))(\lambda) &= \int_0^\infty (u(t,x)-u_d(t,x))\varphi_{\lambda}^{\alpha,\beta}(x) d\mu_{\alpha,\beta}(x)\\
    &= \int_0^\infty u(t,x)\varphi_{\lambda}^{\alpha,\beta}(x) d\mu_{\alpha,\beta}(x)\\
    &- \int_0^\infty u_d(t,x)\varphi_{\lambda}^{\alpha,\beta}(x) d\mu_{\alpha,\beta}(x)\\
    &= \mathcal{F}_{\alpha,\beta}(u(t,\cdot))(\lambda) - \mathcal{F}_{\alpha,\beta}(u_d(t,\cdot))(\lambda)\\
    &=\widehat{u}(t,\lambda)-\widehat{u}_d(t,\lambda),
\end{align*}
here we have used property of the integral. According to the above statement and using Lemma \ref{lemma}, we have
\begin{equation*}
    \|u(t,\cdot)-u_d(t,\cdot)\|_{\mathcal{H}}^2 = \int_0^\infty (\lambda^2+\rho^2)^2|\widehat{u}(t,\lambda)-\widehat{u}_d(t,\lambda)|^2 d\nu_{\alpha,\beta}(\lambda)
\end{equation*}
\begin{align*}
    &=\int_0^\infty (\lambda^2+\rho^2)^2\Bigg|\frac{1-\mathbb{E}_{\gamma,1}\left(-\frac{\lambda^{2}+\rho^2+m}{1+a(\lambda^{2}+\rho^2)}t^\gamma\right)}{1-\mathbb{E}_{\gamma,1}\left(-\frac{\lambda^{2}+\rho^2+m}{1+a(\lambda^{2}+\rho^2)}T^\gamma\right)}\left(\widehat{\psi}(\lambda)-\widehat{\psi}_d(\lambda)\right)\\
    &-\frac{\mathbb{E}_{\gamma,1}\left(-\frac{\lambda^{2}+\rho^2+m}{1+a(\lambda^{2}+\rho^2)}T^\gamma\right)-\mathbb{E}_{\gamma,1}\left(-\frac{\lambda^{2}+\rho^2+m}{1+a(\lambda^{2}+\rho^2)}t^\gamma\right)}{1-\mathbb{E}_{\gamma,1}\left(-\frac{\lambda^{2}+\rho^2+m}{1+a(\lambda^{2}+\rho^2)}T^\gamma\right)}\left(\widehat{\phi}(\lambda)-\widehat{\phi}_d(\lambda)\right)\bigg|^2d\nu_{\alpha,\beta}(\lambda)\\
    &\lesssim \int_0^\infty (\lambda^2+\rho^2)^2\left|\widehat{\psi}(\lambda)-\widehat{\psi}_d(\lambda)\right|^2d\nu_{\alpha,\beta}(\lambda) + \int_0^\infty (\lambda^2+\rho^2)^2\left|\widehat{\phi}(\lambda)-\widehat{\phi}_d(\lambda)\right|^2d\nu_{\alpha,\beta}(\lambda)\\
    &= \|\psi-\psi_d\|_{\mathcal{H}}^2 + \|\phi-\phi_d\|_{\mathcal{H}}^2.
\end{align*}
Thus, one gets
\begin{equation*}
    \|u(t,\cdot)-u_d(t,\cdot)\|_{\mathcal{H}}^2 \lesssim \|\psi-\psi_d\|_{\mathcal{H}}^2 + \|\phi-\phi_d\|_{\mathcal{H}}^2,
\end{equation*}
and
\begin{equation*}
    \|u-u_d\|_{C([0,T],\mathcal{H})}^2 \lesssim \|\psi-\psi_d\|_{\mathcal{H}}^2 + \|\phi-\phi_d\|_{\mathcal{H}}^2.
\end{equation*}

By writing \eqref{f solution} in the form
\begin{equation*}
    \widehat{f}(\lambda) = \frac{\lambda^{2}+\rho^2+m}{1 - \mathbb{E}_{\gamma,1}\left(-\frac{\lambda^{2}+\rho^2+m}{1+a(\lambda^{2}+\rho^2)}T^\gamma\right)}\widehat{\psi}(\lambda) -\frac{(\lambda^{2}+\rho^2+m)\mathbb{E}_{\gamma,1}\left(-\frac{\lambda^{2}+\rho^2+m}{1+a(\lambda^{2}+\rho^2)}T^\gamma\right)}{1 - \mathbb{E}_{\gamma,1}\left(-\frac{\lambda^{2}+\rho^2+m}{1+a(\lambda^{2}+\rho^2)}T^\gamma\right)}\widehat{\phi}(\lambda),
\end{equation*}
and applying similar estimates again we can observe that 
\begin{multline*}
    \|f-f_d\|_{2,\mu}^2 = \|\widehat{f}-\widehat{f_d}\|_{2,\nu}^2 = \int_0^\infty \left|\widehat{f}(\lambda)-\widehat{f_d}(\lambda)\right|^2 d\nu_{\alpha,\beta}(\lambda)\\
    =\int_0^\infty \Biggl|\frac{\lambda^{2}+\rho^2+m}{1 - \mathbb{E}_{\gamma,1}\left(-\frac{\lambda^{2}+\rho^2+m}{1+a(\lambda^{2}+\rho^2)}T^\gamma\right)}\left(\widehat{\psi}(\lambda)-\widehat{\psi}_d(\lambda)\right)\\
    - \frac{(\lambda^{2}+\rho^2+m)\mathbb{E}_{\gamma,1}\left(-\frac{\lambda^{2}+\rho^2+m}{1+a(\lambda^{2}+\rho^2)}T^\gamma\right)}{1 - \mathbb{E}_{\gamma,1}\left(-\frac{\lambda^{2}+\rho^2+m}{1+a(\lambda^{2}+\rho^2)}T^\gamma\right)}\left(\widehat{\phi}(\lambda)-\widehat{\phi}_d(\lambda)\right)\Biggr|^2d\nu_{\alpha,\beta}(\lambda)
\end{multline*}
\begin{align*}
    &\lesssim \int_0^\infty \left|\frac{\lambda^{2}+\rho^2+m}{1 - \mathbb{E}_{\gamma,1}\left(-\frac{\lambda^{2}+\rho^2+m}{1+a(\lambda^{2}+\rho^2)}T^\gamma\right)}\left(\widehat{\psi}(\lambda)-\widehat{\psi}_d(\lambda)\right)\right|^2d\nu_{\alpha,\beta}(\lambda)\\
    &+ \int_0^\infty \left|\frac{(\lambda^{2}+\rho^2+m)\mathbb{E}_{\gamma,1}\left(-\frac{\lambda^{2}+\rho^2+m}{1+a(\lambda^{2}+\rho^2)}T^\gamma\right)}{1 - \mathbb{E}_{\gamma,1}\left(-\frac{\lambda^{2}+\rho^2+m}{1+a(\lambda^{2}+\rho^2)}T^\gamma\right)}\left(\widehat{\phi}(\lambda)-\widehat{\phi}_d(\lambda)\right)\right|^2d\nu_{\alpha,\beta}(\lambda)\\
    &\lesssim \int_0^\infty (\lambda^{2}+\rho^2)^2 \left|\widehat{\psi}(\lambda)-\widehat{\psi}_d(\lambda)\right|^2d\nu_{\alpha,\beta}(\lambda) + \int_0^\infty (\lambda^{2}+\rho^2)^2 \left|\widehat{\phi}(\lambda)-\widehat{\phi}_d(\lambda)\right|^2d\nu_{\alpha,\beta}(\lambda)\\
    &= \|\psi-\psi_d\|_{\mathcal{H}}^2 + \|\phi-\phi_d\|_{\mathcal{H}}^2.
\end{align*}
It follows easily that
\begin{equation*}
    \|f-f_d\|_{2,\mu}^2 \lesssim \|\psi-\psi_d\|_{\mathcal{H}}^2 + \|\phi-\phi_d\|_{\mathcal{H}}^2,
\end{equation*}
ending the proof.
\end{proof}

\subsubsection{Stability Test} 
Here to check Theorem \ref{Stability} we consider a ISP for the heat equation with one dimensional Sturm-Liouville operator
\begin{equation}
\label{Eqq1} 
u_t(t,x)-u_{xx}(t,x) = f(x), \quad 0<t<1, \quad x>0,
\end{equation} 
with conditions 
\begin{equation}
\label{Conn1} 
u(0,x)=u(1,x)=0, 
\end{equation} 
where we put $T=\gamma=1$, $\alpha=\beta=-\frac{1}{2}$, $a=m=0$ and $\phi(x)=\psi(x)=0$ for all $x>0$.



Also, consider a perturbed problem with some noise 
\begin{equation*}
   u^{\epsilon}_{t}(t,x)-u^{\epsilon}_{xx}(t,x) = f^{\epsilon}(x), \quad 0<t<1, \quad x>0,
\end{equation*}
with conditions
\begin{equation*}
    u^{\epsilon}(0,x)=0, \quad \text{and} \quad u^{\epsilon}(1,x)=\epsilon\cdot e^{-x^2}, \quad x>0,
\end{equation*}
and with additional information
$\phi^{\epsilon}(x)=0$ and $\psi^{\epsilon}(x)=\epsilon\cdot e^{-x^2}$, 
where $\epsilon$ is a positive constant. Then by  Theorem \ref{Theorem2},
we have
\begin{equation*}
    u^{\epsilon}(t,x)=\frac{\epsilon}{\sqrt{\pi}}\int_0^\infty \frac{1-e^{-\lambda^2t}}{1-e^{-\lambda^2}}e^{-\frac{\lambda^2}{4}}\cos(\lambda x)d\lambda,
\end{equation*}
and
\begin{equation*}
    f^{\epsilon}(x)=\frac{\epsilon}{\sqrt{\pi}}\int_0^\infty \frac{\lambda^2 e^{-\frac{\lambda^2}{4}}}{\left(1-e^{-\lambda^2}\right)}\cos(\lambda x)d\lambda.
\end{equation*}

Illustrations of our calculations above are given in Table \ref{table}.

\begin{table}[ht]
\centering
\begin{tabular}{|p{4cm}|p{3cm}|p{3cm}|p{3cm}|}
\hline
$\epsilon$ & 1 & 0.2 & 0.02\\
\hline
$\|\psi-\psi^{\epsilon}\|_{\mathcal{H}}^2$ & 1.5 & 0.06 & 0.0006\\
\hline
$\|u-u^{\epsilon}\|_{C([0,1],\mathcal{H})}^2$ & 0.75 & 0.03 & 0.0003\\
\hline
$\|f-f^{\epsilon}\|_{2,\mu}^2$ & 1.0474 & 0.041897 & 0.0004\\
\hline
\end{tabular}
\caption{Stability Test}
\label{table}
\end{table}


{\bf Conclusion}. Table \ref{table} confirms that the solution of the problem \eqref{InvHeat equation}-\eqref{Date2} is continuously depending on the given data. Small changes in the given data imply small changes in $(u,f)$. 

\section{Appendix}

Calculations in Table \ref{table} are made by using Maple 2021 program with the following codes:
\begin{equation*}
    psi:=exp(-x^2),
\end{equation*}
\begin{equation*}
    hat(psi):=int\left(\frac{1}{\sqrt{2\pi}}\cdot psi \cdot cos(x\cdot \lambda), x=0..\infty\right),
\end{equation*}
\begin{equation*}
    norm(psi):=40\cdot int\left(\frac{4}{\sqrt{2\pi}}\cdot \lambda^4\cdot |hat(psi)|^2, \lambda=0..\infty\right),
\end{equation*}
\begin{equation*}
    hat(u):=\frac{1-exp(-\lambda^2\cdot t)}{1-exp(-\lambda^2)}\cdot hat(psi),
\end{equation*}
\begin{equation*}
    norm(u):= int\left(\frac{4}{\sqrt{2\pi}}\cdot \lambda^4\cdot |hat(u)|^2, \lambda=0..\infty\right),
\end{equation*}
\begin{equation*}
    \lim_{t\rightarrow1} (norm(u)),
\end{equation*}
\begin{equation*}
    hat(f):=\frac{\lambda^2\cdot hat(psi)}{1-exp(-\lambda^2)},
\end{equation*}
and
\begin{equation*}
    norm(f):= int\left(\frac{4}{\sqrt{2\pi}}\cdot |hat(f)|^2, \lambda=0..\infty\right).
\end{equation*}

\end{document}